\documentclass[12pt]{article}
\usepackage[utf8]{inputenc}
\usepackage{graphicx}
\usepackage[a4paper, margin=1.2in]{geometry}  
\usepackage{amsmath, amsfonts}
\usepackage{tcolorbox}
\usepackage{amssymb}
\usepackage{amsthm}
\usepackage{lastpage}
\usepackage{fancyhdr}
\usepackage{accents}
\usepackage{enumitem}
\usepackage{tikz-cd}
\usepackage{xcolor}
\usepackage[british]{babel}
\usepackage[backend=biber,maxnames=99]{biblatex}
\usepackage{tikzit}
\usepackage{subcaption}
\usepackage{caption}
\usepackage{float}

\tikzstyle{small_black_dot}=[fill=black, draw=black, shape=circle]
\tikzstyle{what is this}=[fill=white, draw=black, shape=circle]
\tikzstyle{lookingempty}=[fill=white, draw=none, shape=circle]
\tikzstyle{purpledot}=[fill={rgb,255: red,128; green,0; blue,128}, draw={rgb,255: red,128; green,0; blue,128}, shape=circle]
\tikzstyle{bluedot}=[fill=blue, draw=blue, shape=circle]
\tikzstyle{reddot}=[fill=red, draw=red, shape=circle]
\tikzstyle{purplerdot}=[fill={rgb,255: red,128; green,0; blue,128}, draw=black, shape=circle]
\tikzstyle{greendot}=[fill={rgb,255: red,0; green,110; blue,0}, draw={rgb,255: red,0; green,110; blue,0}, shape=circle]

\tikzstyle{new edge style 0}=[-]
\tikzstyle{new edge style 1}=[-, draw={rgb,255: red,128; green,0; blue,128}]
\tikzstyle{blueline}=[-, draw=blue]
\tikzstyle{redline}=[-, draw=red]

\addbibresource{low-dim-top.bib}

\newcommand{\Z}{\mathbb{Z}}
\newcommand{\R}{\mathbb{R}}
\newcommand{\Q}{\mathbb{Q}}

\newcommand{\CP}{\mathbb{C}P}

\DeclareMathOperator{\Id}{Id}

\DeclareMathOperator{\GCD}{GCD}
\DeclareMathOperator{\Torsion}{Torsion}
\DeclareMathOperator{\rk}{rk}

\newtheorem{thm}{Theorem}
\newtheorem{dfn}[thm]{Definition}
\newtheorem{notation}[thm]{Notation}
\newtheorem{prop}[thm]{Proposition}
\newtheorem{lemma}[thm]{Lemma}

\newtheorem{question}[thm]{Question}
\newtheorem{addendum}{Addendum}[thm]

\newtheorem*{thm*}{Theorem}
\newtheorem*{prop*}{Proposition}

\newtheorem{thmalph}{Theorem}

\theoremstyle{remark}

\newtheorem{example}[thm]{Example}
\newtheorem*{example*}{Example}
\newtheorem{rmk}[addendum]{Remark}
\newtheorem*{remark}{Remark}

\graphicspath{ {images/} }
\title{
{Constructing Rational Homology 3-Spheres That Bound Rational Homology 4-Balls}\\
}
\author{Lisa Lokteva}
\date{\today}

\hyphenation{Riemen-schneider}

\begin{document}

\maketitle

\begin{abstract}
We present three large families of new examples of plumbed $3$-manifolds that bound rational homology $4$-balls. These are constructed using two operations, also defined here, that preserve the lack of a lattice embedding obstruction to bounding rational homology balls. Apart from in the cases shown in this paper, it remains open whether these operations are rational homology cobordisms in general.

The families of new examples include a multitude of families of rational surgeries on torus knots, and we explicitly describe which positive torus knots we now know to have a surgery that bounds a rational homology ball.

While not the focus of this paper, we implicitly confirm the slice-ribbon conjecture for new, more complicated, examples of arborescent knots, including many Montesinos knots.
\end{abstract}

\section{Introduction}

In Kirby's problem 4.5 \cite{kirbylist}, Casson asks which rational homology $3$-spheres bound rational homology $4$-balls. While rational homology $3$-spheres abound in nature, including the $r$-surgery $S^3_r(K)$ on a knot $K$ for any $r \in \Q - \{ 0 \}$, very few of them actually bound rational homology balls. In fact, Aceto and Golla showed in \cite[Theorem 1.1]{acetogolla}, that for every knot $K$ and every $q \in \Z_+$, there exist at most finitely many $p \in \Z_+$ such that $S^3_{p/q}(K)$ bounds a rational homology ball. It is hard to answer Casson's question in full generality, but recently a great deal of progress has been made on specific classes of rational homology $3$-spheres. For example, in 2007 we learnt the answer for lens spaces \cite{liscasingle07, liscamultiple07}, in 2020 for positive integral surgeries on positive torus knots \cite{acetogolla, GALL}, and in between we learnt the answer for several other classes on Seifert fibred spaces with three exceptional fibres \cite{lecuonacomplementary, lecuonamontesinos}. We do not yet know the answer for general Seifert fibred spaces with three exceptional fibres. In \cite{lokteva2021surgeries}, the author started studying surgeries on algebraic (iterated torus) knots, which are not Seifert fibred but decompose into Seifert fibred spaces when cut along a maximal system of incompressible tori \cite{gordon}.

An important tool to study which $3$-manifolds bound rational homology balls is the following well-known corollary of Donaldson's diagonalisation theorem \cite[Theorem 1]{Donaldson1987}: \begin{prop}[Corollary of Donaldson's Theorem] \label{prop:latticeembeddings}
Let $Y$ be a rational homology 3-sphere and $Y=\partial X$ for $X$ a negative definite smooth connected oriented 4-manifold. If $Y=\partial W$ for a smooth rational homology 4-ball $W$, then there exists a lattice embedding \[ (H_2(X)/\Torsion, Q_{X}) \hookrightarrow (\Z^{\rk H_2(X)}, -\Id). \]
\end{prop} \noindent Here, $Q_X$ is the intersection form on $H_2(X)/\Torsion$. Determining which $3$-manifolds in a family $\mathfrak{F}$ bound rational homology $4$-balls using lattice embeddings often goes like this: \begin{enumerate}[label=(\roman*)]
\item Find a negative definite filling $X(Y)$ for every $Y \in \mathfrak{F}$.
\item Guess the family $\mathfrak{F}' \subset \mathfrak{F}$ of manifolds whose filling's intersection lattice (that is second homology with the intersection form) embeds into the standard lattice of the same rank.
\item Show that $(H_2(X(Y)),Q_{X(Y)})$ does not embed into $(\Z^{b_2(X(Y))}, -\Id)$ for any $Y \in \mathfrak{F}-\mathfrak{F}'$.
\item Hopefully prove that $Y$ bounds a rational homology ball for any $Y\in \mathfrak{F}'$.
\end{enumerate} Every step of this process has the potential to go wrong. For starters, there exist $3$-manifolds without any definite fillings \cite{gollalarson}. However, lens spaces, surgeries on torus knots and large surgeries on algebraic knots do have definite fillings. In fact, they all bound definite plumbings of disc bundles on spheres. Step (iv) is not guaranteed to work either. For example, $S^3_{-m^2}(K)$ bounds the knot trace $D^4_{-m^2}(K)$ ($D^4$ with a $-m^2$-framed $2$-handle glued along $K$) which has intersection lattice $(\Z, (-m^2))$ which embeds into $(\Z, -\Id)$, but according to \cite[Theorem 1.2]{acetogolla}, $S^3_{-m^2}(K)$ bounds a rational homology ball for at most two positive integer values of $m$. However, in \cite{liscasingle07, liscamultiple07,acetogolla,GALL} the authors managed to find a different filling $X(Y)$ for each $Y$ in such a way that the lattice embedding obstruction turned out perfect. These $X(Y)$s have been plumbings of disc bundles on spheres with a tree-shaped plumbing graph, moreover satisfying the property that the quantity \[I=\sum_{v \in V} (-w(v)-3), \] where $V$ is the set of vertices of the graph and $w(v)$ is the weight of $v$, would be negative.

Steps (ii) and (iii) can sometimes be done at the same time, but often, like in \cite{GALL} where $\mathfrak{F}$ is the set of positive integral surgeries on positive torus knots, they cannot. It is then important to eliminate embeddable cases early in order to proceed with step (iii).  Theorem 1.1 in \cite{GALL}, the classification of positive integral surgeries on positive torus knots bounding rational homology balls, lists 5 families that are Seifert fibred spaces with 3 exceptional fibres. They bound a negative definite star-shaped plumbing with three legs. Families (1)-(3) have two complementary legs, that is two legs whose weight sequences are Riemenschneider dual (defined, for the reader's convenience, in Section \ref{sec:complegs} of this paper). All such 3-manifolds that bound a rational homology ball have been classified by Lecuona in \cite{lecuonacomplementary}. Family (5) contains two exceptional graphs which were known to bound rational homology balls both because they arise as boundaries of tubular neighbourhoods of rational cuspidal curves in \cite{bobluhene07} and because they are surgeries on torus knots $T(p,q)$ where $q \equiv \pm 1 \pmod{p}$, which were studied in \cite{acetogolla}. However, Family (4) took the authors of \cite{GALL} a while to find, in the meantime thwarting their attempts at step (iii). Eventually they found Family (4) using a computer. This allowed them to finish off their lattice embedding analysis, but Family (4) still looked surprising and strange and begged the question of ``How could we have predicted its existence?"

\subsection{GOCL and IGOCL Moves}

This work came out of widening the perspective and asking which boundaries of $4$-manifolds described by plumbing trees with negative definite intersection forms and low $I$ bound rational homology balls. As a preliminary question, we asked ourselves which plumbing trees generate an embeddable intersection lattice. We looked at what the graphs of $3$-manifolds we know to bound rational homology balls look like and tried to see if there are any common patterns. In \cite[Remark 3.2]{lecuonamontesinos}, Lecuona describes how to get all lens spaces that bound rational homology balls from the linear graphs $(-2,-2,-2)$, $(-3,-2,-3,-3,-3)$, $(-3,-2,-2,-3)$ and $(-2,-2,-3,-4)$ using some modifications. (She restates Lisca's result in \cite{liscasingle07} in the language of plumbing graphs rather than fractions $p/q$ for $L(p,q)$.) In this paper we define a couple of moves called GOCL and IGOCL moves on embedded plumbing graphs that preserve embeddability and generalise the moves described by Lecuona. From this point of view, Lecuona's list simply turns into a list of IGOCL and GOCL moves that keep the graph linear. The IGOCL move was also used by Jonathan Simone in \cite{simone2020classification} under the name of expansions. The GOCL move is a generalisation of Lisca's expansions in \cite{liscasingle07}.

Now, we may ask ourselves if these moves preserve the property of the described $3$-manifolds bounding rational homology balls. There is unfortunately no obvious rational homology cobordism between two $3$-manifolds differing by a GOCL or an IGOCL move. We can however prove that repeated applications of these moves to the embeddable linear graphs $(-3,-2,-3,-3,-3)$, $(-3,-2,-2,-3)$ and $(-2,-2,-3,-4)$ give $3$-manifolds bounding rational homology balls. This results in the following theorem:

\begin{figure}
\centering
\tikzfig{32333general}
\caption{These are the graphs obtainable by performing IGOCL and GOCL moves on the linear graph $(-3,-2,-3,-3,-3)$. Here the length of the chain of $-2$'s is $k\geq 0$, $(a_1,\dots,a_{m_1})$ and $(\alpha_1,\dots,\alpha_{m_2})$ are complementary sequences, and $(b_1,\dots,b_{n_1})$ and $(\beta_1,\dots,\beta_{n_2})$ are complementary sequences.} \label{fig:32333general}
\end{figure}

\begin{figure}
\centering
\tikzfig{3223generalcoloured}
\caption{The form of all graphs obtainable from $(-3,-2,-2,-3)$ using GOCL moves. Here the sequences $(a_1,\dots,a_{l_1})$ and $(\alpha_1,\dots,\alpha_{l_2})$ are complementary, as well as the sequences $(b_1,\dots, b_{m_1})$ and $(\beta_1,\dots,\beta_{m_2})$, and the sequences $(z_1,\dots, z_{n_1})$ and $(\zeta_1,\dots,\zeta_{n_2})$.} \label{fig:3223general}
\end{figure}

\begin{figure}
\centering
\tikzfig{liscagraph2234extendedgeneralcoloured}
\caption{The form of all graphs obtainable from $(-2,-2,-3,-4)$ by GOCL and IGOCL moves. Here the length of the chain of $-2$'s is $k\geq 0$, $(a_1,\dots,a_{m_1})$ and $(\alpha_1,\dots,\alpha_{m_2})$ are complementary sequences, and $(b_1,\dots,b_{n_1})$ and $(\beta_1,\dots,\beta_{n_2})$ are complementary sequences.} \label{fig:liscagraph2234extendedgeneral}
\end{figure}

\begin{thmalph} \label{thm:general}
All $3$-manifolds described by the plumbing graphs in Figures \ref{fig:32333general}, \ref{fig:3223general} and \ref{fig:liscagraph2234extendedgeneral} bound rational homology balls.
\end{thmalph}

\begin{remark}
See Definition \ref{def:complementary} for the definition of \textit{complementary}.
\end{remark}

\noindent There are several methods to prove this theorem. The easiest one, which we can call \textbf{The Simple Method}, uses the following proposition, which follows from the long exact sequence of the pair combined with Poincaré duality and the Universal Coefficient Theorem:

\begin{prop}[The Simple Method] \label{prop:Xhas10n1n2handles}
If a $4$-manifold $X$ consists of one $0$-handle, $n$ $1$-handles, and $n$ $2$-handles, and if $\partial X$ is a rational homology $S^3$, then $X$ is a rational homology ball.
\end{prop}

\noindent We use this by showing that we may perform two integral surgeries on the three-manifolds described by these plumbing trees and obtain $(S^1 \times S^2)\# (S^1 \times S^2)$. This method works for every one of the families of Figures \ref{fig:32333general}, \ref{fig:3223general} and \ref{fig:liscagraph2234extendedgeneral}.

A more refined method is to show that the above plumbed $3$-manifolds are double covers of $S^3$ branched over a $\chi$-slice link, that is a link bounding a surface $S$ of Euler characteristic $1$ in $D^4$ \cite[Definition 1]{donaldowens}. By \cite[Proposition 5.1]{donaldowens}, the double cover of $D^4$ branched over a surface of Euler characteristic $1$ is a rational homology $4$-ball. We use this method, described in Subsection \ref{subsec:complicatedmethod}, and more specifically Proposition \ref{prop:complicatedmethod}, for the graphs in Figures \ref{fig:32333general} and \ref{fig:3223general}. Given the way we construct $S$ in this paper, this method amounts to The Simple Method, but with the extra step of showing that we can perform the surgeries equivariantly under an involution. This extra step is quite challenging, and we do not at this moment know if we can perform it on the family of Figure \ref{fig:liscagraph2234extendedgeneral}. The bonus of using this, more difficult, method is that we obtain new examples of links that are $\chi$-ribbon in the process.

The families of Figures \ref{fig:32333general}, \ref{fig:3223general} and \ref{fig:liscagraph2234extendedgeneral}, together with the one generated by GOCL moves from $(-2,-2,-2)$, include all lens spaces bounding rational homology balls. The $(-2,-2,-2)$ family, however, only includes linear graphs already found by Lisca. On the other hand, the families of Figures \ref{fig:32333general}, \ref{fig:3223general} and \ref{fig:liscagraph2234extendedgeneral} also contain more complicated graphs. In \cite{aceto20}, Aceto defines linear complexity of a plumbing tree to be the minimal number of vertices we need to remove in order to get a linear graph. Our families have linear complexities up to 2. Many papers, e.g. \cite{aceto20, acetogolla, GALL, lecuonamontesinos, simone2020classification}, that use lattice embeddings to obstruct plumbed $3$-manifolds from bounding a rational homology ball have used arguments of the form ``If my graph $\Gamma$ is embeddable, then this other linear graph obtained from $\Gamma$ is embeddable, and we know what those look like.", which gets harder to do the further $\Gamma$ is away from being linear. Thus, we do not yet really have lattice embedding obstructions for families of graphs of complexity greater than $1$. The families of Theorem \ref{thm:general} include many graphs of Seifert fibred spaces. They include Family (4) in \cite{GALL} and predict its existence because Family (4) is just the intersection between the set of graphs in Figures \ref{fig:32333general}, \ref{fig:3223general} and \ref{fig:liscagraph2234extendedgeneral} and the negative definite plumbing graphs of positive integral surgeries on positive torus knots.

As mentioned above, there is no obvious rational homology cobordism between the $3$-manifolds described by two plumbing graphs differing by a GOCL or an IGOCL move. This is interesting in comparison with the case in the works by Aceto \cite{aceto20} and Lecuona \cite{lecuonacomplementary}. Lecuona shows that given a plumbing graph $\Gamma$, you can modify it to a graph $\Gamma'$ by subtracting $1$ from the weight of a vertex $v$ and attaching two complementary legs $(-a_1,\dots,-a_m)$ and $(-b_1,\dots,-b_n)$ (see Section \ref{sec:complegs} or \cite{lecuonacomplementary} for definitions) to $v$, and the $3$-manifolds $Y_\Gamma$ and $Y_{\Gamma'}$ described by the graphs will be rational homology cobordant, that is bound a rational homology $4$-ball if and only if the other one does. Thus, if she wants to know if a $Y_{\Gamma'}$, for $\Gamma'$ a graph with two complementary legs coming out of the same vertex, bounds a rational homology ball, she can reduce it to the same question for a simpler graph. However, since we do not know if the GOCL and IGOCL moves are rational homology cobordisms, we cannot play this trick for complementary legs growing out of different vertices.

Another work that has shown that applying GOCL moves to embedded plumbing graphs bounding certain rational homology balls gives us new plumbed $3$-manifolds that bound rational homology balls is \cite{akbulutlarson18} by Akbulut and Larson. They show that the families $\Sigma(2, 4n+1, 12n+5)$ and $\Sigma(3,3n+1, 12n+5)$ of Brieskorn spheres bound rational homology balls. In fact, these families are obtained by applying GOCL moves to the plumbing graphs of $\Sigma(2,5,17)$ and $\Sigma(3,4,17)$. Just like us in Section \ref{sec:analysis}, they perform a surgery on their spaces and the result of this surgery is the same for each space, in their case a $0$-surgery on the figure-eight knot. Their lemma \cite[Lemma 2]{akbulutlarson18}, saying that any integral homology sphere obtained from a surgery on a $0$-surgery on a rationally slice knot, can be viewed as a generalisation of The Simple Method and Proposition \ref{prop:Xhas10n1n2handles} when $n=1$. With the same technique as Akbulut and Larson, \c{S}avk \cite{Savk2020} constructed two more families $\Sigma(2, 4n+3, 12n+7)$ and $\Sigma(3,3n+2, 12n+7)$ of Brieskorn spheres that bound rational homology balls. These are obtainable from $\Sigma(2,7,19)$ and $\Sigma(3,5,19)$, respectively, using IGOCL moves.

\subsection{Rational Surgeries on Torus Knots}

An interesting generalisation of \cite[Theorem 1.1]{GALL}, would be to classify all positive rational surgeries on positive torus knots that bound rational homology balls. Theorem \ref{thm:general} allows us to construct more examples of such surgeries than is sightly to write down. Instead, we may ask ourselves the following question: \begin{question}
For which $1<p<q$ with $\GCD(p,q)=1$ is there an $r\in \Q_+$ such that $S^3_r(T(p,q))$ bounds a rational homology ball?
\end{question} Section \ref{sec:torusknots} is dedicated to showing the following theorem: \begin{thmalph} \label{thm:torusknotlist}
For the following pairs $(p,q)$ with $1<p<q$ and $\GCD(p,q)=1$, there is at least one $r \in \Q_+$ such that $S^3_r(T(p,q))$ bounds a rational ball. Here $k,l \geq 0$. \begin{enumerate}
\item $(k+2,(l+1)(k+2)+1)$ \label{enum:onemodp}
\item $(k+2,(l+2)(k+2)-1)$ \label{enum:minusonemodp}
\item $(2k+3,(l+1)(2k+3)+2)$ \label{enum:twomodp}
\item $(2k+3,(l+2)(2k+3)-2)$ \label{enum:minustwomodp}
\item $(k^2+7k+11, k^3+12k^2+45k+51)$ \label{enum:32333cubic}

\item $(S^{(k)}_{l+1},S^{(k)}_{l+2})$ for $(S^{(k)}_i)$ a sequence defined by $S^{(k)}_0=1$, $S^{(k)}_1=2$, $S^{(k)}_2=2k+7$ and $S^{(k)}_{i+2}=(k+4)S^{(k)}_{i+1}-S^{(k)}_i$. \label{enum:3223one}
\item $(T^{(k)}_{l+1},T^{(k)}_{l+2})$ for $(T^{(k)}_i)$ a sequence defined by $T^{(k)}_0=1$, $T^{(k)}_1=k+2$, $T^{(k)}_2=k^2+6k+7$ and $T^{(k)}_{i+2}=(k+4)T^{(k)}_{i+1}-T^{(k)}_i$. \label{enum:3223two}
\item $(U_{l+1},U_{l+2})$ for $(U_i)$ a sequence defined by $U_0=1$, $U_1=3$, $U_2=14$ and $U_{i+2}=5U_{i+1}-U_i$. \label{enum:32333recursive}

\item $(R_{l+1},R_{l+2})$ for $(R_i)$ a sequence defined by $R_0=1$, $R_1=3$, $R_2=17$ and $R_{i+2}=6R_{i+1}-R_i$. \label{enum:2234gall}

\item $(P_{l+1},P_{l+2})$ for $(P_i)$ a sequence defined by $P_0=1$, $P_1=4$, $P_2=19$ and $P_{i+2}=5P_{i+1}-P_i$. \label{enum:2234one}
\item $(Q_{l+1},Q_{l+2})$ for $(Q_i)$ a sequence defined by $Q_0=1$, $Q_1=2$, $Q_2=9$ and $Q_{i+2}=5Q_{i+1}-Q_i$. \label{enum:2234two}

\item $(A, (n+1)Q+P)$ for $P$ and $Q$ such that $L(Q,P)$ bounds a rational homology ball (or equivalently $\frac{Q}{P}$ lying in Lisca's set $\mathcal{R}$ \cite{liscasingle07}), and $A$ a multiplicative inverse to either $Q$ or $nQ+P$ modulo $(n+1)Q+P$ such that $0<A<(n+1)Q+P$. \label{enum:compllegs1}
\item $((n+1)Q+P, (l+1)((n+1)Q+P)+A)$ for $P$ and $Q$ such that $L(Q,P)$ bounds a rational homology ball (or equivalently $\frac{Q}{P}$ lying in Lisca's set $\mathcal{R}$ \cite{liscasingle07}), and $A$ a multiplicative inverse to either $Q$ or $nQ+P$ modulo $(n+1)Q+P$ such that $0<A<(n+1)Q+P$. \label{enum:compllegs2}
\item $(B, P)$ for $P$ and $Q$ such that $L(Q,P)$ bounds a rational homology ball (or equivalently $\frac{Q}{P}$ lying in Lisca's set $\mathcal{R}$ \cite{liscasingle07}), and $B$ a multiplicative inverse to either $P\lceil \frac{Q}{P}\rceil-Q$ or $Q-P\lfloor \frac{Q}{P} \rfloor$ modulo $P$ such that $0<B<P$. \label{enum:compllegs3}

\item $(P, (l+1)P+B)$ for $P$ and $Q$ such that $L(Q,P)$ bounds a rational homology ball (or equivalently $\frac{Q}{P}$ lying in Lisca's set $\mathcal{R}$ \cite{liscasingle07}), and $B$ a multiplicative inverse to either $P\lceil \frac{Q}{P}\rceil-Q$ or $Q-P\lfloor \frac{Q}{P} \rfloor$ modulo $P$ such that $0<B<P$. \label{enum:compllegs4}
\item $(P,Q)$ such that there is a number $n$ such that $(P,Q,n) \in \mathcal{R} \sqcup \mathcal{L}$ for the sets $\mathcal{R}$ and $\mathcal{L}$ defined in \cite[Theorem 1.1]{GALL}. (Note that here $r=n \in \{PQ, PQ-1, PQ+1 \}$, so we are looking at an integral surgery.) \label{enum:nearpq}
\end{enumerate}
\end{thmalph}

\noindent The curious reader can use the methods of Section \ref{sec:torusknots} to obtain the surgery coefficients $r$ too.

In Theorem \ref{thm:torusknotlist}, case \ref{enum:nearpq} is shown to bound rational homology balls in \cite{GALL} and reflects the degenerate cases of surgeries on torus knots that are lens spaces or connected sums of lens spaces, cases \ref{enum:compllegs1}-\ref{enum:compllegs4} are shown to bound rational homology balls in \cite{lecuonacomplementary} because their graphs have a pair of complementary legs, while the cases \ref{enum:onemodp}-\ref{enum:2234two} are shown to bound rational homology balls in this paper, using that there exists an $r$ such that $S^3_r(T(p,q))$ bounds a graph in the families of Figures \ref{fig:32333general}, \ref{fig:3223general} and \ref{fig:liscagraph2234extendedgeneral}. The authors of \cite{GALL} classified all positive integral surgeries on positive torus knots that bound rational homology balls. The classification included 18 families, whereof families (6)-(18) are included in our family \ref{enum:nearpq}, family (4) in our family \ref{enum:2234gall}, and the others in families \ref{enum:onemodp} and \ref{enum:minusonemodp}.

At the moment of writing we do not know of any other positive torus knots having positive surgeries bounding rational homology balls. The pair $(8,19)$ is in some metric the smallest example not to appear on the list of Theorem \ref{thm:torusknotlist}. Thus we may concretely ask: \begin{question}
Is there an $r \in \Q_+$ such that $S^3_r(T(8,19))$ bounds a rational homology ball?
\end{question}

We may also note that some positive torus knots have many surgeries that bound rational homology balls. For example, Theorem \ref{thm:general} allows us to construct numerous finite and infinite families of surgery coefficients $r \in \Q_+$ such that $S^3_r(T(2,3))$ bounds a rational homology ball. All we need to do is to choose weights in the graphs in Figures \ref{fig:liscagraph2234extendedgeneral}, \ref{fig:3223general} and \ref{fig:32333general} so that we get a starshaped graph with three legs whereof one is $(-2)$ and another is either $(-2,-2)$ or $(-3)$. For example $S^3_{\frac{(11k+20)^2}{22k^2+79k+71}}(T(2,3))$ bounds a plumbing of the shape in Figure \ref{fig:liscagraph2234extendedgeneral} with $(b_1,\dots, b_{m_1})=(4)$ and $(a_1,\dots,a_{n_1})=(3)$, and thus bounds a rational homology ball for any $k \geq 0$. There are also surgeries on $T(2,3)$ that bound rational balls, but do not have graphs of the shapes of Figures \ref{fig:32333general}, \ref{fig:3223general} or \ref{fig:liscagraph2234extendedgeneral}. For example, we have $S^3_{\frac{64}{7}}(T(2,3))=-S^3_{64}(T(3,22))$, which bounds a rational homology ball because it is the boundary of the tubular neighbourhood of a rational curve in $\CP^2$ \cite{bobluhene07}, but whose lattice embedding contains a basis vector with coefficient $2$, which we do not get by applying $GOCL$ or $IGOCL$ moves to $(-2,-2,-3,-4)$, $(-3,-2,-2,-3)$ and $(-3,-2,-3,-3,-3)$. The lattice embedded plumbing graph of $S^3_{\frac{64}{7}}(T(2,3))$ does however fit into Family $\mathcal{C}$ of \cite{stipszabwahl} of symplectically embeddable plumbings. Unfortunately, Family $\mathcal{C}$ of \cite{stipszabwahl} contains both surgeries on $T(2,3)$ that bound rational homology balls and ones that do not. For example, $S^3_{\frac{169}{25}} (T(2,3))$ of \cite[Section 2.4, Figure 12]{stipszabwahl} does not bound a rational ball despite bounding a plumbing with an embeddable intersection form. A later paper \cite{bhupalstipsicz} classified which surgeries on $T(2,3)$ appearing in Family $\mathcal{C}$, viewed as surface singularity links, bound a rationally acyclic Milnor fibre. Interestingly, all but two of the embedded graphs in that family are generated by applying IGOCL moves to the graph of $S^3_{\frac{64}{7}}(T(2,3))$. However, we do not know if any other members of Family $\mathcal{C}$ bound a rational homology ball which is not a Milnor fibre. Hence, the following is a rich open question worth studying:

\begin{question}
For which $r \in \Q_+$ does $S^3_r(T(2,3))$ bound a rational homology ball?
\end{question}

\subsection{Outline}
We start off the paper with Section \ref{sec:complegs} by recalling some results on complementary legs and the basics of the lattice embedding setup. In Section \ref{sec:analysis} we define the GOCL and IGOCL moves and show Theorem \ref{thm:general}. In Section \ref{sec:torusknots} we prove Theorem \ref{thm:torusknotlist} for the families \ref{enum:onemodp}-\ref{enum:2234two}, while the other families follow directly from \cite{lecuonacomplementary} and \cite{GALL}.

\subsection*{Acknowledgements}
This research was carried out by the author when she was a PhD student at the University of Glasgow, funded by the College of Science and Engineering. This is an updated version of a part of her thesis.

\section{Complementary Legs and Lattice Embeddings} \label{sec:complegs}

In this section we list some definitions and easy propositions that are helpful to understanding the paper. We recall the definition of lattice embeddings and apply it to plumbing graphs and complementary legs. In this section, we assume that the reader is familiar with plumbings of disc bundles over spheres and how to convert plumbing diagrams into Kirby diagrams. (If not, see \cite[Example 4.6.2]{gompfstipsicz}.)

\begin{notation}\label{notation:plumbinggraph}

Given a forest-shaped plumbing graph $\Gamma$ with weight function $W: V \to \Z$, we may associate to it a $4$-manifold $X_\Gamma$ by describing its Kirby diagram. First, we draw a small unknot at each vertex of $\Gamma$. Then, for each edge, we create a Hopf linking between the knots corresponding to the edge ends as in Figure \ref{fig:plumbinggraphkirby}. We denote the resulting link by $L_\Gamma$. Then $X_\Gamma$ is the simply-connected $4$-manifold obtained by attaching $2$-handles with framing $W(v)$ to the unknot at each vertex $v$.

We denote the $3$-manifold $\partial X_\Gamma$ by $Y_\Gamma$.
\end{notation}

\begin{figure}[h]
\centering
\includegraphics[width=0.5\textwidth]{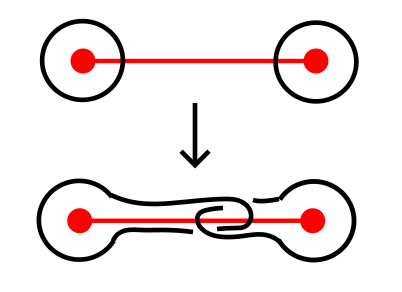}
\caption{Kirby diagram associated to a plumbing.}
\label{fig:plumbinggraphkirby}
\end{figure}

\begin{rmk}
A common abuse of terminology is ``the plumbing graph $\Gamma$ bounds a rational homology ball'', which means that $Y_\Gamma$ bounds a rational homology ball.
\end{rmk}


Let $\Gamma$ be a forest-shaped plumbing graph. The second homology of $X_\Gamma$ is the free abelian group $\Z\langle V_1,\dots,V_k \rangle $ on the vertices and the intersection form is \[ \langle V_i, V_j \rangle_{Q_{X}} =  \begin{cases}
    \text{weight of } V_i & \text{ if } i=j \\
    1 & \text{ if } V_i \text{ is adjacent to } V_j \\
    0 & \text{ otherwise.}
\end{cases} \] 

\begin{dfn} Let $X$ be a $4$-manifold with boundary. A \emph{lattice embedding} \[ f: (H_2(X)/\Torsion, Q_{X}) \hookrightarrow (\Z^{N}, -\Id) \] is a linear map $f$ such that $\langle V_i, V_j \rangle_{Q_{X}} = \langle f(V_i), f(V_j) \rangle_{-\Id}$. We will simply denote $\langle \cdot, \cdot \rangle := \langle \cdot, \cdot \rangle_{-\Id}$. If nothing else is specified, then $N=\rk H_2(X)$, that is the number of vertices in the graph. \end{dfn} \noindent Common abuses of notation include ``embedding of the graph", meaning an embedding of the lattice $(H_2(X)/\Torsion, Q_{X})$, where $X=X_\Gamma$ for a plumbing graph $\Gamma$.

Knowing when a lattice embedding exists is useful because of Proposition \ref{prop:latticeembeddings} in the introduction.


Now we turn our heads to lattice embeddings of specific plumbing graphs, namely pairs of complementary legs.

\begin{dfn} We define the negative continued fraction $[a_1,\dots, a_n]^-$ as
\[ [a_1,\dots, a_n]^-=a_1-\cfrac{1}{a_2-\cfrac{1}{\ddots - \cfrac{1}{a_n.}}} \]
\end{dfn}

Negative continued fractions often show up in low-dimensional topology because of the slam-dunk Kirby move \cite[Figure 5.30]{gompfstipsicz}, which allows us to substitute a rational surgery on a knot by an integral surgery on a link.

\begin{dfn} \label{def:complementary}
A two-component weighted linear graph $(-\alpha_1,\dots,-\alpha_n), (-\beta_1,\dots,-\beta_k)$ (with $\alpha_i, \beta_j$ integers greater than or equal to 2) is called a pair of complementary legs if \[ \frac{1}{[\alpha_1,\dots,\alpha_n]^-}+\frac{1}{[\beta_1,\dots,\beta_k]^-}=1. \] We call the sequence $(\beta_1,\dots,\beta_k)$ the Riemenschneider dual or complement of the sequence $(\alpha_1,\dots,\alpha_n)$, and we call the fractions $[\alpha_1,\dots,\alpha_n]^-$ and $[\beta_1,\dots,\beta_k]^-$ complementary.
\end{dfn}

\begin{dfn}
A Riemenschneider diagram is a finite set of points $S$ in $\Z_+ \times \Z_-$ such that $(1,-1) \in S$ and for every point $(a,b) \in S$ but one, exactly one of $(a+1,b)$ or $(a,b-1)$ is in $S$. If $(n,k) \in S$ is the point with the largest $n-k$, we say that the Riemenschneider diagram represents the fractions $[\alpha_1,\dots,\alpha_n]^-$ and $[\beta_1,\dots,\beta_k]^-$, where $\alpha_i$ is one more than the number of points with $x=i$ and $\beta_j$ is one more than the number of points with $y=-j$.
\end{dfn}

\begin{figure}[h]
\centering
\includegraphics[width=0.5\textwidth]{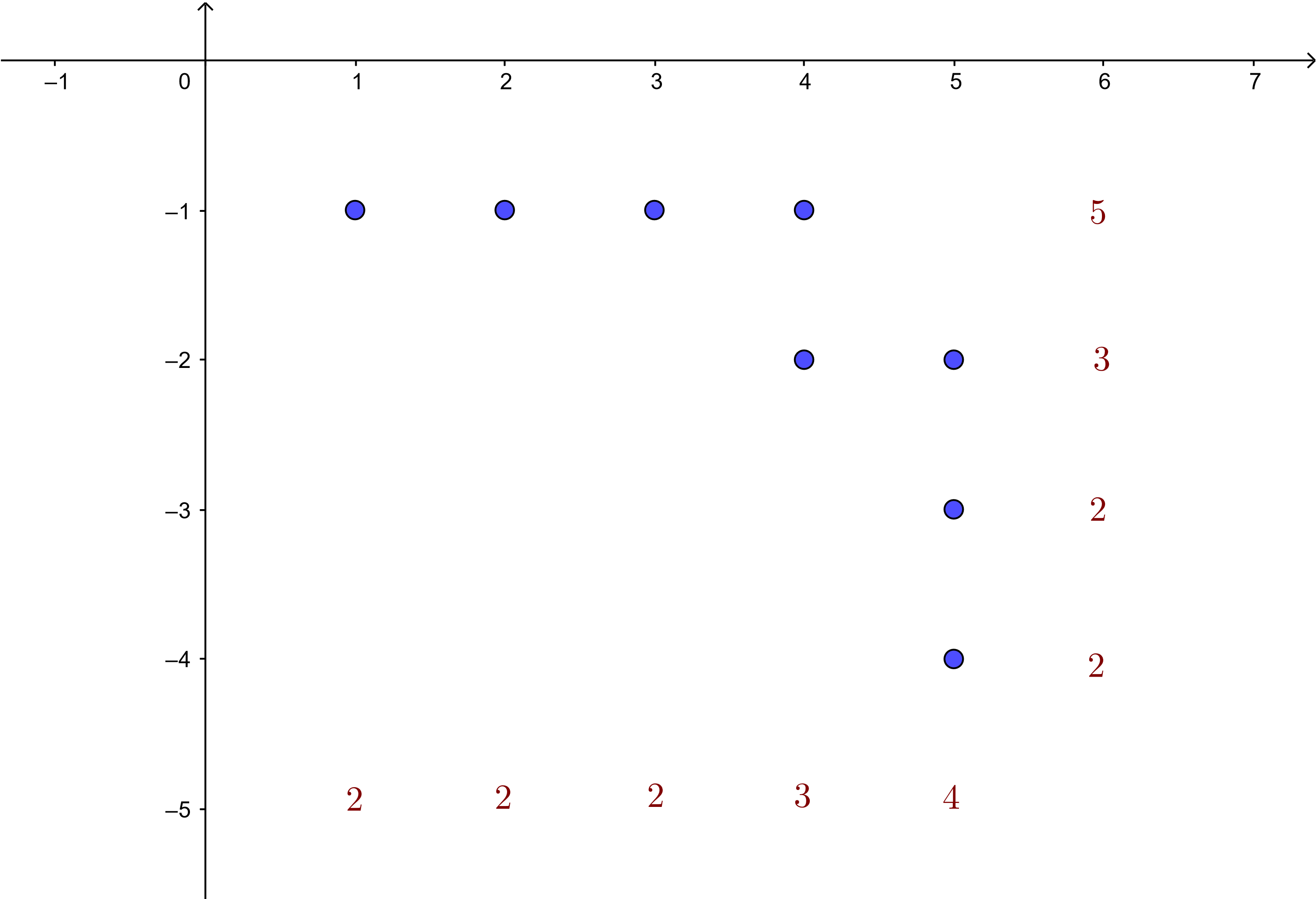}
\caption{Example of a Riemenschneider diagram representing the complementary fractions $[5,3,2,2]^-$ and $[2,2,2,3,4]^-$.}
\label{riemenschneider_diagram}
\end{figure}

\begin{example}
See Figure \ref{riemenschneider_diagram} for an example of a Riemenschneider diagram.
\end{example}

\begin{prop}[Riemenschneider \cite{riemenschneider}]
The two fractions represented by a Riemenschneider diagram are complementary.
\end{prop}

\begin{rmk}
Note that given any continued fraction $[\alpha_1,\dots,\alpha_n]^-$ with all $\alpha_i \geq 2$, we may construct a Riemenschneider diagram representing $[\alpha_1,\dots,\alpha_n]^-$ and its Riemenschneider dual.
\end{rmk}

The following theorem is well-known, but we explicitly write out the embedding construction for the reader's convienience.

\begin{prop} \label{prop:complegembed}
Every pair of complementary legs has a lattice embedding.
\end{prop}

\begin{proof}
The embedding can be constructed algorithmically from a Riemenschneider diagram. Denote the vertices of the two complementary legs by $(U_1,\dots, U_{m_1})$ and $(V_1,\dots,V_{m_2})$. These vertices generate the second homology of the plumbed $4$-manifold described by the graph. We need to send every vertex to an element of $\Z\langle e_1, \dots, e_{m_1+m_2} \rangle$. We will construct this embedding recursively through ``partial embeddings'', which are maps $f: \{U_1, \dots, U_{m_1}, V_1, \dots, V_{m_2}\} \to \Z\langle e_1, \dots, e_{m_1+m_2} \rangle $ such that $\langle f(X), f(X) \rangle \geq \text{ weight of } X$.

Order the points in the Riemenschneider diagram so that $P_1=(1,-1)$, and if $P_i=(a,b)$, then point $P_{i+1}$ is either $(a+1,b)$ or $(a,b-1)$. Now, we recursively build an embedding as follows.

\begin{itemize}
\item Start by mapping both $U_1$ and $V_1$ to $e_1$. 
\item For each non-final $i$, if the current partial embedding is $(u_1,\dots,u_n), (v_1,\dots,v_k)$ (meaning that $(U_1,\dots, U_n)$ gets mapped to $(u_1,\dots,u_n)$ and $(V_1,\dots, V_k)$ gets mapped to $(v_1, \dots, v_k)$) and $P_i=(a,b)$ is such that $P_{i+1}=(a+1,b)$, then the new partial embedding will be $(u_1,\dots,u_n+e_{i+1}), (v_1,\dots,v_k-e_{i+1},e_{i+1})$. If $P_{i+1}=(a,b-1)$, then the new partial embedding will be $(u_1,\dots,u_n-e_{i+1},e_{i+1}), (v_1,\dots,v_k+e_{i+1})$ instead.
\item If $P_i$ is final and the current partial embedding is $(u_1,\dots,u_n), (v_1,\dots,v_k)$, the new embedding will be $(u_1,\dots,u_n+e_{i+1}), (v_1,\dots,v_k-e_{i+1})$. (Or the other way around, as this sign choice is arbitary.)
\end{itemize}

It is easy to see that an embedding $(u_1,\dots,u_{m_1}), (v_1,\dots,v_{m_2})$ constructed this way will have the properties:
\begin{itemize}
    \item Each $u_i$ for $i=1,\dots,n-1$ and $v_j$ for $j=1,\dots,k$ will be a sum of consecutive basis vectors, all but the last one with coefficient $1$, and the last one with coefficient $-1$. Meanwhile, $u_n$ will be a sum of consecutive basis vectors, all with coefficient $1$.
    \item If the Riemenschneider diagram represents the fractions $[\alpha_1,\dots,\alpha_n]^-$ and $[\beta_1,\dots,\beta_k]^-$, then $\langle u_i, u_i \rangle=-\alpha_i$ and $\langle v_j, v_j \rangle=-\beta_j$.
    \item Since $u_i$ and $u_{i+1}$ have exactly one basis vector in common, one with a positive coefficient and one with a negative one, $\langle u_i, u_{i+1} \rangle =1$, and similarly $\langle v_i, v_{i+1} \rangle =1$.
    \item The other pairs $(u_i, u_j)$ (with $|i-j|>1$) don't share basis vectors and are thus orthogonal. Similarly, the pairs $(v_i, v_j)$ with $|i-j|>1$ don't share basis vectors and are thus orthogonal.
    \item It is easy to show by induction on the construction that $\langle u_i, v_j \rangle=0$ for all $i,j$.
\end{itemize} These properties show that we are in fact looking at a lattice embedding of the complementary legs.
\end{proof}

\begin{rmk}
In fact, if $e_1$ is fixed to hit the first vertex of each complementary leg, the rest of the embedding is unique up to renaming of elements and sign of the coefficient \cite[Lemma 5.2]{bakerbucklecuona}.
\end{rmk}

The following facts are useful when dealing with lattice embeddings. We will often use these properties without citing them. The first fact follows from reversing the Riemenschneider diagram, the second from embedding the sequences $(a_m,\dots,a_1)$ and $(b_n,\dots, b_1)$ as in Proposition \ref{prop:complegembed} and mapping the $-1$-weighted vertex to $-e_1$, and the rest from looking at a Riemenscheider diagram.

\begin{prop} \label{prop:latticeembeddingproperties}
Let $(a_1,\dots,a_m)$ and $(b_1,\dots, b_n)$ be complementary sequences. Then the following hold: \begin{enumerate}
\item The sequences $(a_m,\dots,a_1)$ and $(b_n,\dots, b_1)$ are complementary.
\item The linear graph $(-a_1,\dots,-a_m,-1,-b_n,\dots,-b_1)$ embeds in $(\Z^{m+n}, -\Id)$.
\item Either $a_m$ or $b_n$ must equal $2$, so assume without loss of generality that $b_n=2$. Blowing down the $-1$ in the linear graph $(-a_1,\dots,-a_m,-1,-b_n,\dots,-b_1)$ gives us the linear graph $(-a_1,\dots,-(a_m-1),-1,-b_{n-1},\dots,-b_1)$. This graph is once again a pair of complementary legs linked by a $-1$, described by the Riemenschneider diagram obtained by the removing the last point.
\item Repeatedly blowing down the $-1$ in linear graphs of the form \[ (-a_1,\dots,-a_m,-1,-b_n,\dots,-b_1)\] eventually takes us to $(-2,-1,-2)$, or even further to $(-1,-1)$ or $(0)$.
\item Similarly, blowing up next to the $-1$ gives the linear graph \[(-a_1,\dots,-(a_m+1),-1,-2,-b_n,\dots,-b_1)\] or \[(-a_1,\dots,-a_m,-2,-1,-(b_n+1),\dots,-b_1)\], which are both pairs of complementary legs connected by a $-1$, described by Riemenschneider diagrams that are expansions of the initial one by one dot. 
\end{enumerate}
\end{prop}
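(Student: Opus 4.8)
The plan is to reduce all five statements to elementary moves on Riemenschneider diagrams, together with the explicit embedding of a pair of complementary legs from Theorem~\ref{thm:complegembed}. The first thing I would record is that the defining condition of a Riemenschneider diagram forces $S$ to be a single monotone staircase: from $(1,-1)$ exactly one of $(a+1,b),(a,b-1)$ lies in $S$ at every point but the terminal one, so $S$ is the set of cells on a path stepping right or down from $(1,-1)$ to the terminal cell $(m,-n)$, where $[a_1,\dots,a_m]^-$ and $[b_1,\dots,b_n]^-$ are the represented fractions; here $a_i-1$ is the number of cells in column $i$ and $b_j-1$ the number in row $-j$. For part~(1), I would apply the $180^\circ$ rotation $\sigma(x,y)=(m{+}1{-}x,\,-(n{+}1){-}y)$, which carries this staircase to the reversed one. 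The image is again a Riemenschneider diagram, now with terminal cell $(m,-n)$ and with $a_{m+1-i}-1$ cells in column $i$ and $b_{n+1-j}-1$ cells in row $-j$, hence representing $[a_m,\dots,a_1]^-$ and $[b_n,\dots,b_1]^-$; Riemenschneider's theorem then gives complementarity.

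For part~(2), note that by~(1) the pair $\big((a_m,\dots,a_1),(b_n,\dots,b_1)\big)$ is complementary, so the construction of Theorem~\ref{thm:complegembed} yields vectors $u_1,\dots,u_m$ and $v_1,\dots,v_n$ in $\Z^{m+n}$ with $\langle u_i,u_i\rangle=-a_{m+1-i}$, $\langle v_j,v_j\rangle=-b_{n+1-j}$, $\langle u_i,u_{i+1}\rangle=\langle v_j,v_{j+1}\rangle=1$, all farther-apart pairs within a leg orthogonal, $\langle u_i,v_j\rangle=0$ for all $i,j$, and, directly from the recursion, the basis vector $e_1$ occurring (with coefficient $+1$) only in $u_1$ and in $v_1$ while $u_k,v_k$ for $k\ge 2$ lie in the span of $e_2,e_3,\dots$. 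I would then define an embedding of $(-a_1,\dots,-a_m,-1,-b_n,\dots,-b_1)$ by sending the $i$-th vertex of the first leg to $u_{m+1-i}$, the $-1$-vertex to $-e_1$, and the $j$-th vertex of the second leg (counted away from the $-1$) to $v_j$. The self-intersections and the within-leg edges are then immediate; $\langle u_1,-e_1\rangle=\langle -e_1,v_1\rangle=1$ takes care of the two edges at the $-1$-vertex, while $\langle u_k,-e_1\rangle=\langle -e_1,v_k\rangle=0$ for $k\ge 2$ and $\langle u_i,v_j\rangle=0$ take care of all non-adjacencies. (The target is $\Z^{m+n}$, not $\Z^{m+n+1}$, as it must be, since this plumbing form is degenerate of rank $m+n$.)

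For parts~(3)--(5) I would match blow-downs and blow-ups at the $-1$-vertex with deletion and insertion of the terminal cell of the staircase. The final step into $(m,-n)$ is either ``right'', forcing column $m$ to contain a single cell so that $a_m=2$, or ``down'', forcing $b_n=2$; hence at least one of $a_m,b_n$ equals $2$, say $b_n=2$. Blowing down the $-1$ in $(-a_1,\dots,-a_m,-1,-2,-b_{n-1},\dots,-b_1)$ adds $1$ to both neighbours and joins them, giving $(-a_1,\dots,-(a_m-1),-1,-b_{n-1},\dots,-b_1)$; correspondingly, deleting the terminal cell $(m,-n)$ removes one cell from column $m$ and affects no surviving row, so the resulting (still valid) diagram represents $[a_1,\dots,a_{m-1},a_m-1]^-$ and $[b_1,\dots,b_{n-1}]^-$ --- this is~(3). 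Iterating, each blow-down deletes one cell, so after finitely many steps one reaches the one-cell diagram $\{(1,-1)\}$, with legs $[2]^-,[2]^-$ and associated graph $(-2,-1,-2)$ --- this is~(4); one checks along the way that neither leg empties before this point. For~(5), appending a cell to the right of $(m,-n)$, respectively below it, is the inverse move: it changes the legs to $\big([a_1,\dots,a_m,2]^-,\,[b_1,\dots,b_{n-1},b_n{+}1]^-\big)$, respectively $\big([a_1,\dots,a_{m-1},a_m{+}1]^-,\,[b_1,\dots,b_n,2]^-\big)$, which are exactly the leg-pairs of the two graphs produced by blowing up on either side of the $-1$-vertex; the extended diagram is again a Riemenschneider diagram, so its legs are complementary.

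The only thing that needs genuine care is the monotone-staircase description of Riemenschneider diagrams, since it is what makes the terminal-cell bookkeeping in~(3)--(5) transparent, together with the index reversals in~(2); beyond that I expect the argument to be routine, with no essential obstacle.
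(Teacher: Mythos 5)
Your proposal is correct and follows essentially the same route as the paper, which proves (1) by reversing the Riemenschneider diagram, (2) by embedding the reversed sequences via Theorem~\ref{thm:complegembed} and sending the $-1$-vertex to $-e_1$, and (3)--(5) by tracking the terminal point of the diagram under blow-downs and blow-ups. You have simply filled in the bookkeeping (the staircase description, the rotation for (1), and the terminal-cell deletion/insertion for (3)--(5)) that the paper leaves to the reader.
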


\section{Growing Complementary Legs on Lisca's Graphs}
\label{sec:analysis}

The idea for this work comes from studying the lattice embeddings of linear graphs and other trees that are known to bound rational homology 4-balls. Consider for example Lisca's classification of connected linear graphs that bound rational homology 4-balls \cite{liscasingle07}, in the most convenient form for us described by Lecuona in \cite[Remark 3.2]{lecuonamontesinos}. Every family of embeddable graphs can be obtained from the basic graphs $(-2,-2,-2)$, $(-2,-2,-3,-4)$, $(-3,-2,-2,-3)$ and $(-3,-2,-3,-3,-3)$ by repeated application of two types of moves, one of which is the following: choose a basis vector $e$ hitting exactly two vertices $v$ and $w$, where $w$ is \textbf{final} (Lisca's word for leaf, that is a vertex of degree $1$  \cite[p. 6]{liscasingle07}), subtract $1$ from the weight of $v$ and attach a new vertex $u$ of weight $-2$ to $w$. I will show that we can do away with the assumption that $w$ is final and still get 3-manifolds bounding rational homology 4-balls through repeating this operation.

\begin{figure}[H]
\centering
\tikzfig{liscagraph2234}
\caption{Lisca's $(-2,-2,-3,-4)$ graph with embedding.} \label{fig:liscagraph2234}
\end{figure}

\begin{figure}[H]
\centering
\tikzfig{liscagraph2234extended1}
\caption{An expansion of Lisca's $(-2,-2,-3,-4)$ graph with embedding.} \label{fig:liscagraph2234extended1}
\end{figure}

\begin{example}
Consider Figure \ref{fig:liscagraph2234}, showing an embedding of Lisca's $(-2,-2,-3,-4)$ graph into the standard lattice $(\Z \langle e_1, e_2, e_3, e_4 \rangle , -\Id)$. Note that $e_4$ and $e_3$ hit two vertices each. Choose $e_4$. We can now perform the operation described above by choosing $v$ to be the vertex of weight $-4$ and $w$ the vertex of weight $-3$. The result is shown in Figure \ref{fig:liscagraph2234extended1} together with its embedding, which is a kind of ``expansion'' of the embedding in Figure \ref{fig:liscagraph2234}. Our new embedding has two basis vectors hitting exactly two vertices each, namely $e_3$ and $f_1$, whereas $e_4$ now hits three vertices. We may now perform the same operation again on any of these basis vectors, thereby obtaining any graph of the form described in Figure \ref{fig:liscagraph2234extendedgeneral}, with $k=0$. We will show that these graphs do not only have lattice embeddings, but also bound rational homology 4-balls.
\end{example}

\subsection{GOCL and IGOCL moves}

\begin{figure}[H]
\begin{minipage}{\textwidth}
  \begin{subfigure}{\linewidth}
    \centering
    \includegraphics[width=0.7\linewidth]{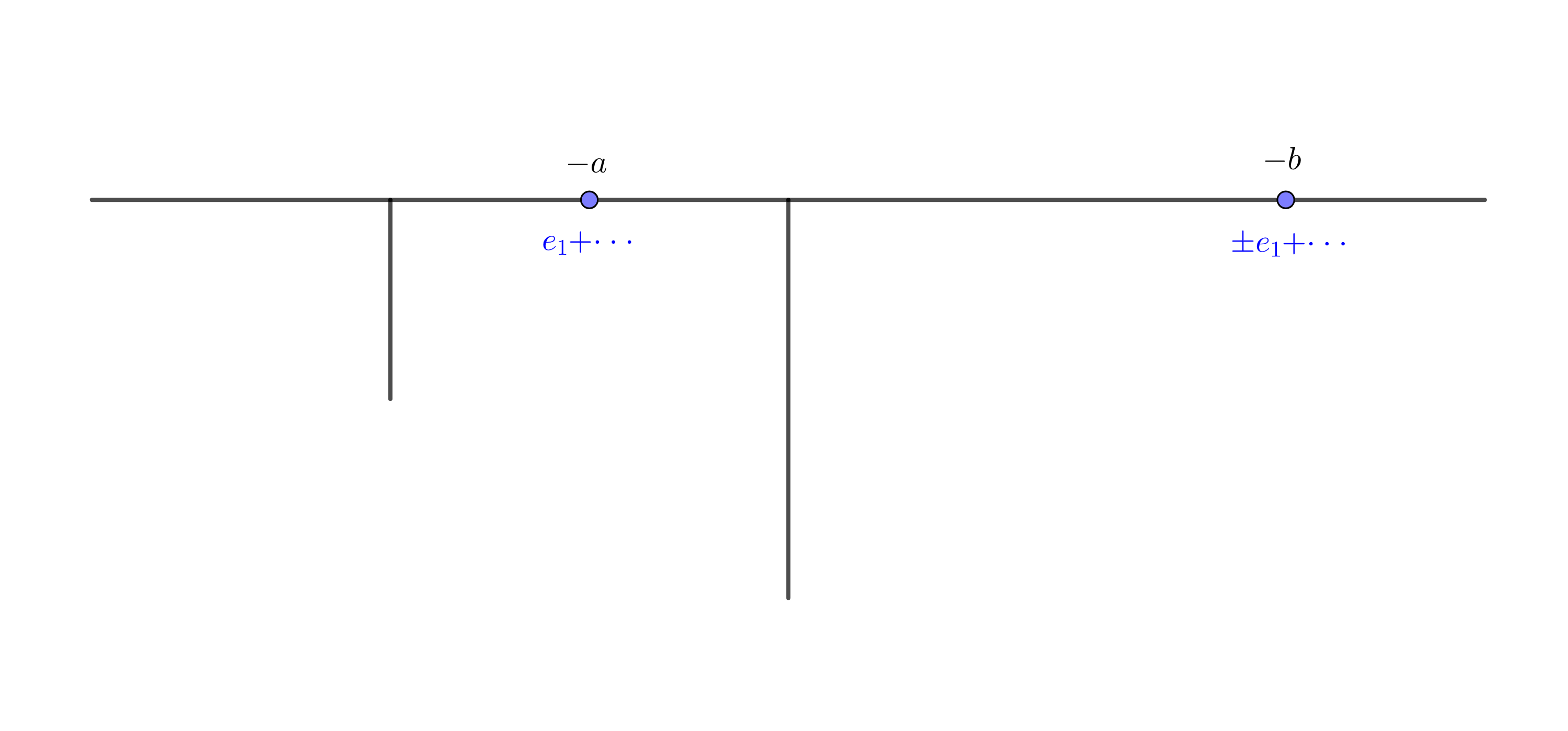}
    \caption{Before GOCL.}
    \label{fig:preGOCL}
  \end{subfigure}
  \begin{subfigure}{\linewidth}
    \centering
    \includegraphics[width=0.7\linewidth]{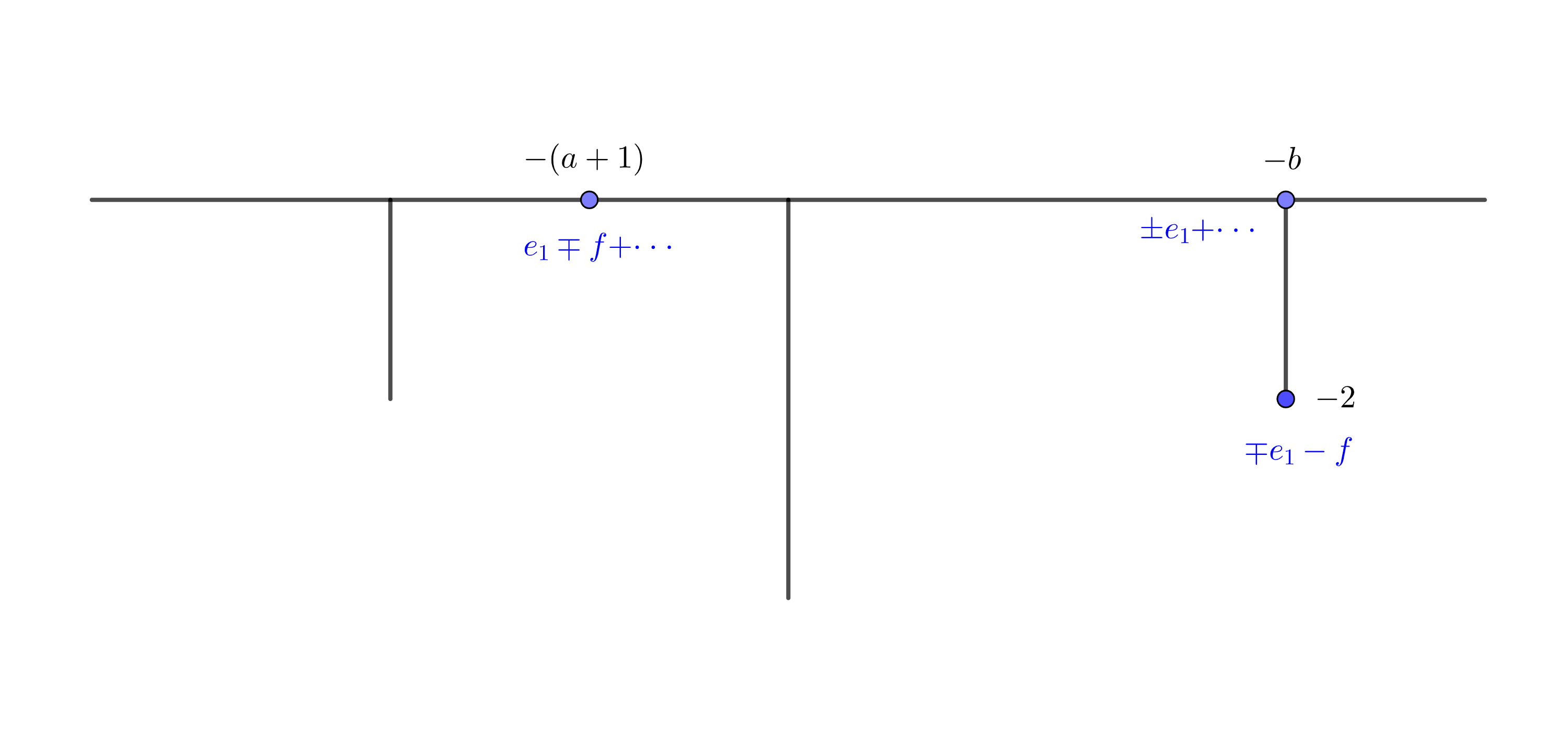}
    \caption{After GOCL.}
    \label{fig:postGOCL}
  \end{subfigure}
\end{minipage}%
\caption{A GOCL move on a graph with a lattice embedding.}
\label{fig:GOCL}
\end{figure}

We will now introduce two moves on forest-shaped plumbing graphs with a lattice embedding. Let $\Gamma=(V,E,W)$ be a weighted negative definite graph with lattice embedding $F: (V,Q_{X_{\Gamma}}) \to (\Z^{|V|},-\Id)$. Assume that there is a basis vector $e$ of $\Z^{|V|}$ hitting exactly two vertices $A$ and $B$ in $\Gamma$, whose images are $v$ and $w$, in any order we prefer. Then a \textbf{GOCL (growth of complementary legs) operation} is constructing an embedded graph $(\Gamma'=(V',E',W'), F')$ by $V'=V\cup C$, $E'=E\cup \{AC\}$ and $u:=F'(C)=-\langle e, v \rangle e - f$, $w':=F'(B)=w-\langle e, v \rangle \langle e, w \rangle f$ and $F'(D)=F(D)$ for all $D \in V-\{B\}$. This move is illustrated by Figure \ref{fig:GOCL}. Note that $\langle e, v \rangle \langle e, w \rangle = \langle f, u \rangle \langle f, w' \rangle$. Thus, the GOCL operation substitutes $e$ by $f$ in the set of basis vectors hitting the graph exactly twice and moreover, the sign difference between the two occurrences of the basis vector is preserved. This operation can therefore be applied repeatedly. If we start with the graph consisting of two vertices of weight $-2$ and no edges, and the embedding $e_1-e_2$ and $e_1+e_2$, then repeated application of GOCL will simply give us two complementary legs.

\begin{figure}[H]
\begin{minipage}{\textwidth}
  \begin{subfigure}{\linewidth}
    \centering
    \includegraphics[width=0.7\linewidth]{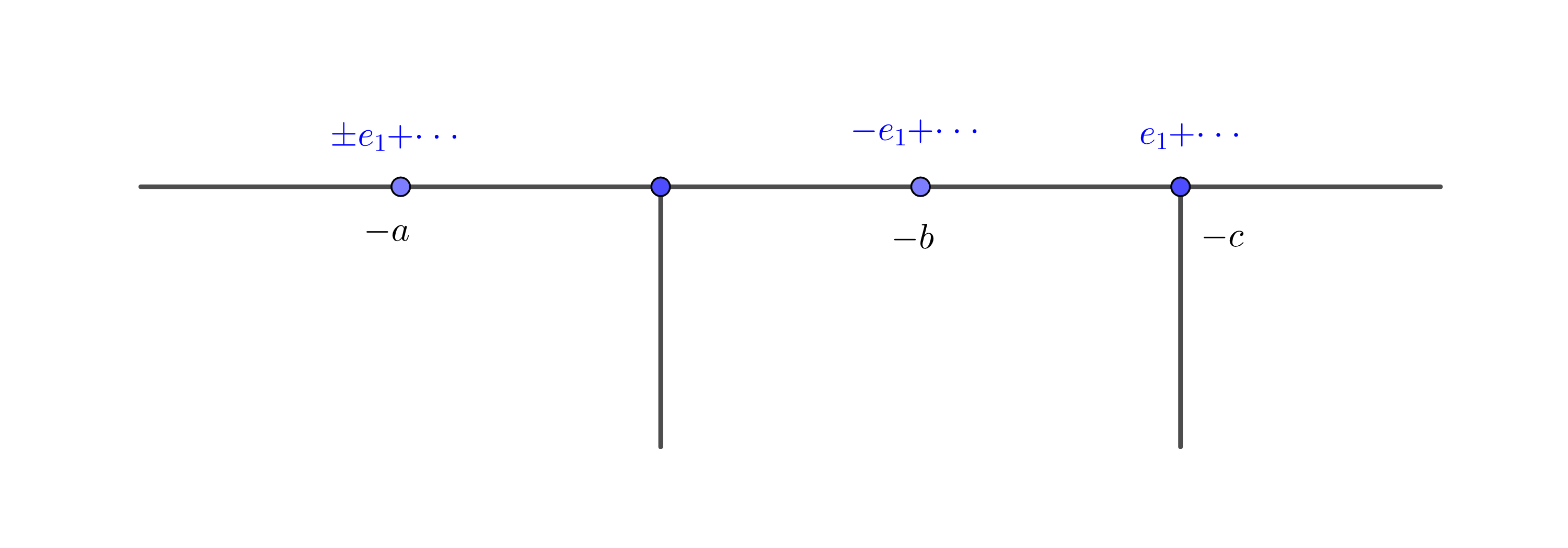}
    \caption{Before IGOCL.}
    \label{fig:preIGOCL}
  \end{subfigure}
  \begin{subfigure}{\linewidth}
    \centering
    \includegraphics[width=0.7\linewidth]{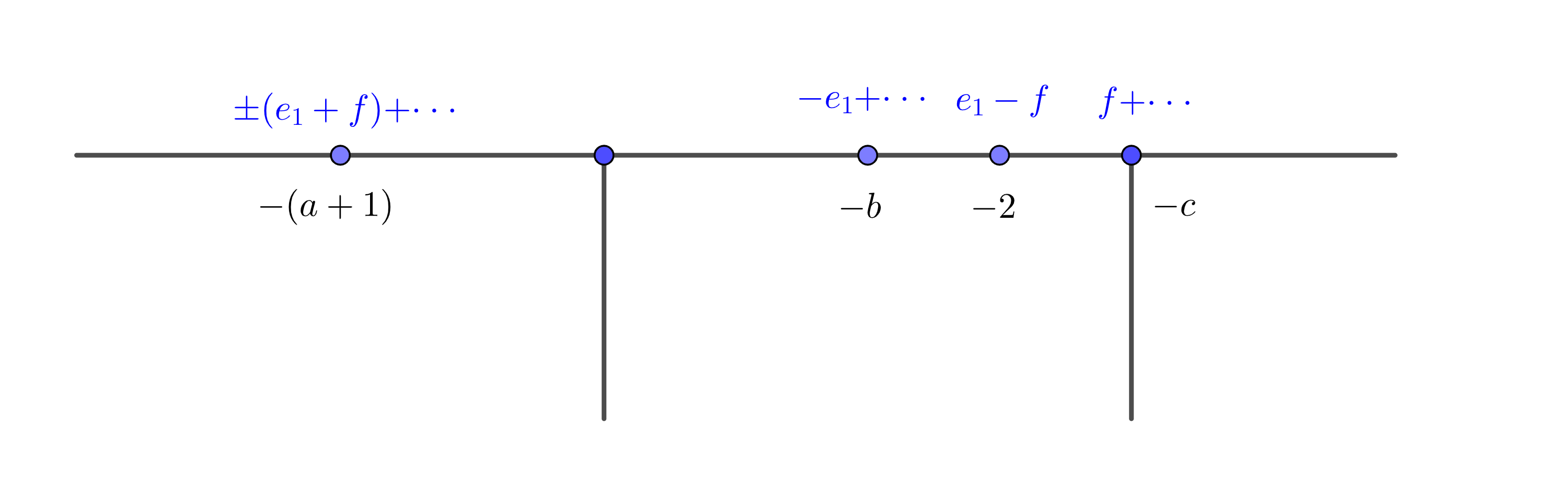}
    \caption{After IGOCL.}
    \label{fig:postIGOCL}
  \end{subfigure}
\end{minipage}%
\caption{An IGOCL move on a graph with a lattice embedding.}
\label{fig:IGOCL}
\end{figure}

The other operation which we will call \textbf{IGOCL (inner growth of complementary legs)} could be described as growing complementary legs from the inside. Suppose a basis vector $e$ hits exactly three vertices $A$, $B$ and $C$ in $\Gamma$, with their images under the lattice embedding $F$ being $u$, $v$ and $w$ respectively. Assume also that $B$ and $C$ are adjacent and that $\langle v, e \rangle \langle w, e \rangle = -1$, that is $e$ hits $v$ and $w$ with opposite signs. Then $\Gamma'=(V', E', W')$ is described by $V'=V\cup \{ D \}$, $E'=(E-\{BC\})\cup \{BD, DC\}$, $F'(D)=-\langle v, e \rangle e + \langle v, e \rangle f$, $F'(C)=w-\langle w, e \rangle e + \langle w, e \rangle f$, $F'(A)=u+\langle u, e \rangle f$ and $F'(X)=F(X)$ for all $X \in V - \{A,C\}$. This operation is illustrated in Figure \ref{fig:IGOCL}. After this operation is performed, we can perform it again on either $e$ or $f$, but the result is essentially the same. What it does is grow a chain of $-2$'s between two vertices and compensate by subtracting from the weight of a different vertex. If we apply the IGOCL operation on a vector hitting a pair of complementary legs three times, we still get a pair of complementary legs, which explains the name.

\subsection{The Complicated Method to Show Theorem \ref{thm:general}} \label{subsec:complicatedmethod}

Now that we have defined the GOCL and IGOCL moves, we want to apply them to Lisca's basic graphs, which are $(-2,-2,-2)$, $(-2,-2,-3,-4)$, $(-3,-2,-2,-3)$, and $(-3,-2,-3,-3,-3)$. We will show for each Lisca graph one by one that the results obtained from repeatedly applying the aforementioned operations always bound rational homology balls. Recall that this can be done for all families using The Simple Method of Proposition \ref{prop:Xhas10n1n2handles}. This subsection explains The Complicated Method to do that, which has the bonus of showing the $\chi$-sliceness of some links including many Montesinos links not treated by Lecuona in \cite{lecuonamontesinos}.

A link in $S^3=\R^3 \cup \{\infty \}$ is called \textbf{strongly invertible} if it is ambient isotopic to one which both is equivariant with respect to the $180^{\circ}$ rotation around the $x$-axis and fulfils the property that each component intersects the $x$-axis in exactly two points \cite{montesinoscoverings}. Now, recall Notation \ref{notation:plumbinggraph}. If $\Gamma$ is a tree, then the link $L_\Gamma$ is strongly invertible. Let $D$ be some Kirby diagram of $X_\Gamma$ such that $L_\Gamma$ is in the equivariant position. For example, Figure \ref{fig:2234involution} shows a possible diagram $D$ for the plumbing graph in Figure \ref{fig:liscagraph2234extended1}.

\begin{figure}[H]
\centering
\includegraphics[width=.7\linewidth]{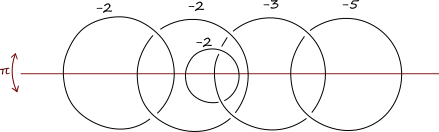}
\caption{Proof that the attaching link of the graph in Figure \ref{fig:liscagraph2234extended1} is strongly invertible.} \label{fig:2234involution}
\end{figure}

Let $\pi_D: X_\Gamma \to X_\Gamma$ be the involution given by extending this $180^\circ$ rotation around the $x$-axis and let $p_D: X_\Gamma \to X_\Gamma/ \pi_D$ be the quotient map when we identify $x \sim \pi_D(x)$. By \cite[Theorem 3]{montesinoscoverings}, $X_\Gamma/ \pi_D +\cong B^4$ and $p$ is a double covering, branched over a surface $S_D \subset B^4$. The surface $S_D$ can be drawn by attaching bands to a disc according to the bottom half of the rotation-equivariant drawing, adding as many half-twists as the weight of the corresponding unknot \cite{montesinoscoverings}. (See Figure \ref{fig:2234arborescent}.) 

\begin{figure}[H]
\centering
\includegraphics[width=.7\linewidth]{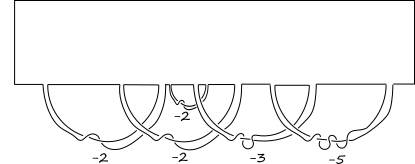}
\caption{$S_D$ for $D$ the diagram in Figure \ref{fig:2234involution} is a disc with five bands attached. It lives inside $B^4$ with the boundary lying on $S^3$, and the interior pushed inside $B^4$.} \label{fig:2234arborescent}
\end{figure}

By $K_D$ we denote the link $K_D=S_D \cap S^3$. Note that $K_D \neq L_\Gamma$. Also note that $K_D = \partial S_D$ and that Figure \ref{fig:2234arborescent} should be understood with the interior pushed into $B^4$. Third, note that $S_D$ and $K_D$ depend on the choice of $D$.

In \cite[Definition 1]{donaldowens}, Donald and Owens define a useful generalisation of sliceness to links. We say that a link in $S^3$ is \textbf{$\chi$-slice} if it bounds a surface of Euler characteristic $1$ without closed components in $B^4$. If the surface has no local maxima, we can call it \textbf{$\chi$-ribbon}. The usefulness of the notion of $\chi$-sliceness lies in the following proposition:

\begin{prop*}[{\cite[Proposition 2.6]{donaldowens}}]
If $L$ is a non-zero-determinant link which bounds a surface $F \subset B^4$ with no closed components and of Euler characteristic $1$, then $\Sigma_2(B^4,F)$ is a rational homology ball.
\end{prop*}

We now state the proposition at the heart of the complicated method to prove Theorem \ref{thm:general}:

\begin{prop}[The Complicated Method]\label{prop:complicatedmethod}
Let $\Gamma$ be a tree-shaped negative-definite plumbing graph, and $D$ an equivariant Kirby diagram of $X_\Gamma$ as above. If we can equivariantly add $n$ $2$-handles with unknotted attaching circles to $D$ and obtain a Kirby diagram $D'$ that describes the $3$-manifold $\#^n(S^1\times S^2)$, then $K_D$ is $\chi$-ribbon, and thus $Y_\Gamma \cong \Sigma_2(S^3,K_D)$ bounds a rational homology ball.
\end{prop}

Before we prove this proposition, let us state the following lemma, which is interesting in itself. This lemma follows directly from combining the Smith Conjecture (proven by Waldhausen in \cite{waldhausensmithconjecture}) and \cite[Proposition 5.1]{heddenni2010}.

\begin{lemma}\label{lem:doublecovers}
If $L$ is a link such that $\Sigma_2(S^3,L)\cong \#^n (S^1 \times S^2)$, then $L$ is the unlink of $n+1$ components.
\end{lemma}

\begin{proof}[{Proof of Proposition \ref{prop:complicatedmethod}}]
Since $\partial \Sigma_2(S^3,K_{D'}) \cong \#^n (S^1 \times S^2)$, $K_{D'}$ must, by Lemma \ref{lem:doublecovers}, be the unlink of $n+1$ components. Since $S_{D'}$ is obtained from $S_D$ by attaching bands, the attachment of bands yields a cobordism $(S^3 \times I, B)$ from $(S^3, K_D)$ to $(S^3,K_{D'})$ with only index $1$ critical points.  Since $K_{D'}$ is the unlink on $n+1$ components, it bounds $n+1$ discs in $B^4$. Let $S$ be the union of $B$ and these discs. It has no maxima and it retracts onto a graph with $n+1$ vertices and $n$ edges, implying that it has Euler characteristic $1$. This shows that $K_D$ is $\chi$-ribbon, and thus, by \cite[Proposition 2.6]{donaldowens}, $Y_\Gamma \cong \partial \Sigma(B^4,S)$, where $\Sigma(B^4,S)$ is a rational homology ball.
\end{proof}

\subsection{$(-2,-2,-2)$}

This graph has embedding $(e_1-e_2,e_2-e_3,-e_1-e_2)$. The only basis vector hitting twice is $e_1$ whose both occurrences are in final vertices. Thus applying the GOCL operation keeps the graph linear and all such graphs have been shown to be bounding rational homology balls by Lisca. In fact, these graphs describe the lens spaces $L(p^2, pq \pm 1)$, for $p>q>0$ with some orientation \cite[Lemma 9.2]{liscasingle07}.

\subsection{$(-3,-2,-3,-3,-3)$} \label{subsec:32333}

In this subsection, we show the following proposition using The Complicated Method described in Subsection \ref{subsec:complicatedmethod}.

\begin{prop}
Every $3$-manifold described by the graph in Figure \ref{fig:32333general} bounds a rational homology $4$-ball.
\end{prop}

In particular, we will apply Proposition \ref{prop:complicatedmethod} to any graph $\Gamma$ in the family described by Figure \ref{fig:32333general}. We will construct an equivariant (with respect to the $180^\circ$ rotation around the $z$-axis) Kirby diagram $D'$ of $S^1 \times S^2$, with all components unknotted and intersecting the $z$-axis exactly twice, such that removing one of the components gives us a Kirby diagram $D$ of $X_\Gamma$.

The proof will be by Kirby calculus, so in Figure \ref{fig:howtoblowdown} we recall the effect of some blow-ups and blow-downs on Kirby diagrams. Recall that if there are $k$ strands of a link component (counted with sign) in a bunch that we are about to perform a $\pm 1$ blow up around, then the framing of that component increases by $\pm k^2$.

\begin{figure}
     \centering
     \begin{subfigure}[b]{0.3\textwidth}
        	\centering
		\includegraphics[width=\textwidth]{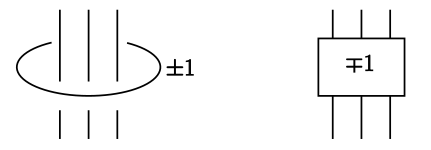}
		\caption{Simple blow-down}
		\label{fig:howtoblowdown1}
     \end{subfigure}
     \hfill
     \begin{subfigure}[b]{0.3\textwidth}
        	\centering
		\includegraphics[width=\textwidth]{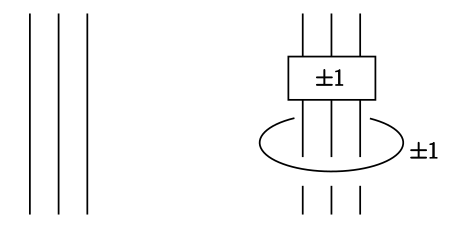}
		\caption{Simple blow-up}
		\label{fig:howtoblowdown2}
     \end{subfigure}
     \hfill
     \begin{subfigure}[b]{0.3\textwidth}
        	\centering
		\includegraphics[width=\textwidth]{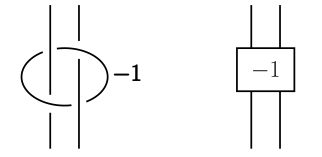}
		\caption{Twisted blow-down}
		\label{fig:howtoblowdowntwistedring}
     \end{subfigure}
        \caption{Useful blow-ups and blow-downs.}
        \label{fig:howtoblowdown}
\end{figure}

\begin{proof}
Now, we start by the chain \[ (-b_{n_1},\dots, -b_1, -(2+k), -1, \underbrace{-2, \dots, -2}_k, -(1+\beta_1), -\beta_2, \dots, -\beta_{n_2}). \] Note that it consists of two Riemenschneider dual chains connected by a $-1$, so by Proposition \ref{prop:latticeembeddingproperties}.4, it blows down to the $(0)$ chain. Since the graph is a tree, the sign of the crossings doesn't matter yet. We will arrange the crossings around the $-(2+k)$-vertex as in Figure \ref{fig:3cycleremovalsimplified}. The chains $(-b_2,\dots, -b_{n_1})$ and $(-2, \dots, -2, -(1+\beta_1), -\beta_2, \dots, -b_{n_2})$, on the other hand, since they are not relevant to the Kirby moves that follow, are represented by tiny squares that freely move on their respective components.

\begin{figure}
     \centering
     \begin{subfigure}[b]{0.3\textwidth}
        	\centering
		\includegraphics[width=\textwidth]{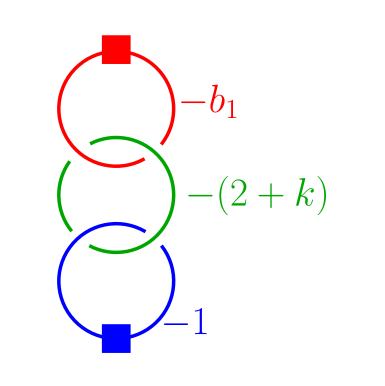}
		\caption{Start}
		\label{fig:3cycleremovalsimplified}
     \end{subfigure}
     \hfill
     \begin{subfigure}[b]{0.3\textwidth}
        	\centering
		\includegraphics[width=\textwidth]{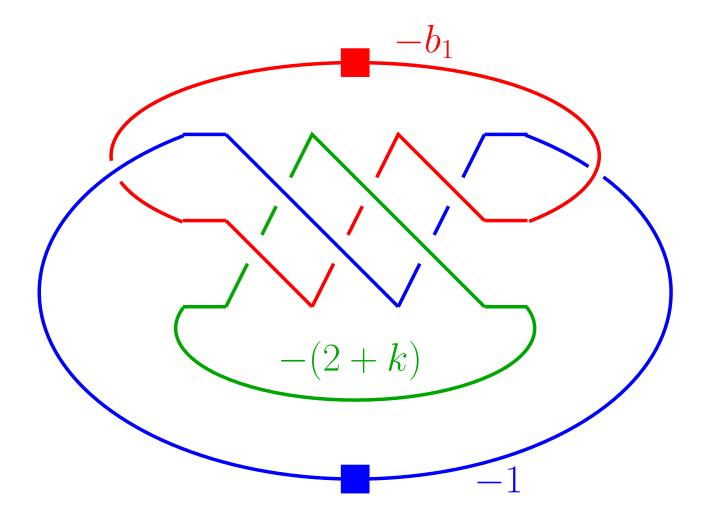}
		\caption{Isotope}
		\label{fig:3cycleremovalblowndown}
     \end{subfigure}
     \hfill
     \begin{subfigure}[b]{0.3\textwidth}
        	\centering
		\includegraphics[width=\textwidth]{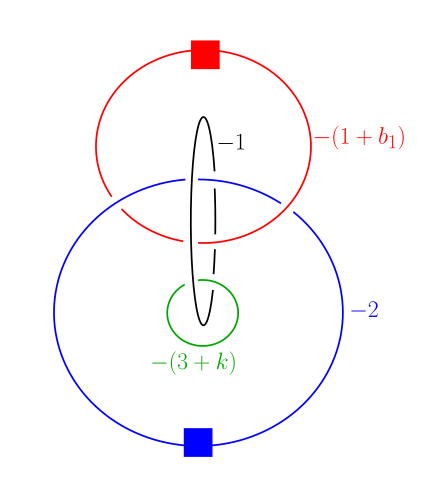}
		\caption{Apply Figure \ref{fig:howtoblowdown1}}
		\label{fig:3cycleremovalblownup}
     \end{subfigure}
        \caption{Creating a $3$-cycle.}
        \label{fig:3cycleremoval}
\end{figure}

Now we apply the Kirby calculus of Figure \ref{fig:3cycleremoval}. The diagram in Figure \ref{fig:3cycleremovalblownup} can no longer be described by a plumbing graph where every vertex is a link component. In fact, the red, blue, and black components form a triple Hopf link. Let us perform a blow-up at the clasp between the blue and the black components. This is a negative clasp, so we need to perform the twisted blow-up of Figure \ref{fig:howtoblowdowntwistedring}. We obtain Figure \ref{fig:32333withlegs}.

\begin{figure}
\centering
\includegraphics[width=.5\linewidth]{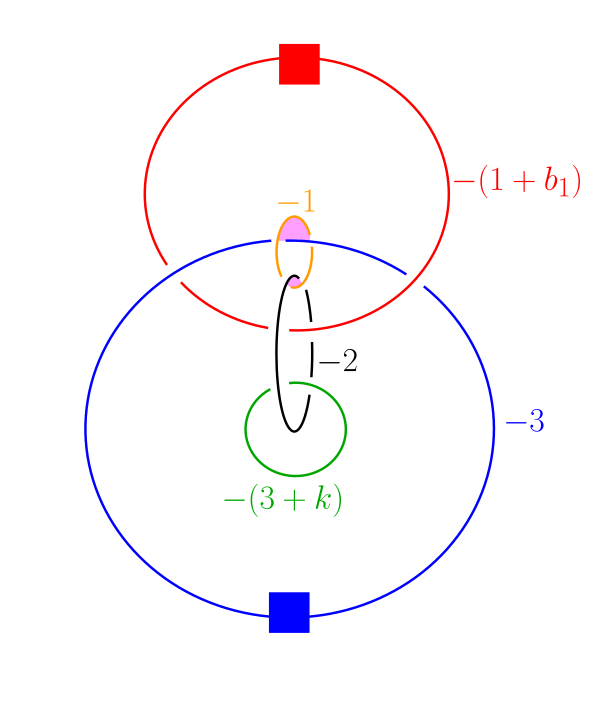}
\caption{}
\label{fig:32333withlegs}
\end{figure}

Now note that removing the amber $-1$-knot in Figure \ref{fig:32333withlegs} yields the Kirby diagram of Figure \ref{fig:32333general} with $(a_1, \dots, a_{m_1})=(2)$. To obtain more general tuples $(a_1,\dots, a_{m_1})$, we will have to repeatedly blow up the clasps on the $-1$-weighted component. At the moment, both of these clasps in Figure \ref{fig:32333withlegs} (in magenta) are negative and can be blown up using Figure \ref{fig:howtoblowdowntwistedring}, an operation that substitutes a clasp by a $-1$-weighted ring with two negative clasps on either side. Thus, repeated blow-ups will give us a figure like Figure \ref{fig:32333withlegs}, but with the amber ring potentially substituted by a longer chain. In any case, the link in this figure is strongly invertible, and removing the $-1$-weighted component yields the Kirby diagram of Figure~\ref{fig:32333general}.
\end{proof}

\subsection{$(-3,-2,-2,-3)$} \label{subsec:3223}

In this subsection, we show the following proposition:

\begin{prop}
Every $3$-manifold described by the graph in Figure \ref{fig:3223general} bounds a rational homology $4$-ball.
\end{prop}

This is done by applying Proposition \ref{prop:complicatedmethod} to graphs $\Gamma$ described by Figure \ref{fig:3223general}. In particular, for each member of this family, we will construct a Kirby diagram $D'$ that 1) describes the $3$-manifold $(S^1 \times S^2)\#(S^1 \times S^2)$, 2) is equivariant with respect to the $180^\circ$ rotation around the $z$-axis, 3) only consists of unknotted components that intersect the $z$-axis exactly twice, and 4) removing two of the components yields a Kirby diagram of $X_\Gamma$.

\begin{proof}
One Kirby diagram of $(S^1 \times S^2)\#(S^1 \times S^2)$ is a $(0,0)$-framed unlink of two components. By Proposition \ref{prop:latticeembeddingproperties}.4, one of the unknots with framing $0$ blows up to the chain \[ (- \beta_{m_2}, \dots, -\beta_2, 1-\beta_1, 1-b_1, -b_2, \dots, -b_{m_1}), \] which can be seen by noting that the above chain is one blow-up away from \[ (- \beta_{m_2}, \dots, -\beta_1, -1, -b_1, \dots, -b_{m_1}). \] Our first move will be to link the other unknot with framing $0$ to the component with framing $1-\beta_1$ using a blow-up, thus obtaining Figure \ref{fig:3223-1}. In this figure, the chains $(-b_2,\dots, -b_{m_1})$ and $(-\beta_2, \dots, \beta_{m_2})$ are represented by tiny squares that freely move on their respective components and if there are two on the same one, they could even pass through each other. Figure \ref{fig:3223-2} is obtained from Figure \ref{fig:3223-1} by a simple isotopy. Note that the purple $-1$-weighted component is linked with the black and the blue ones with negative clasps. We may thus use Figure \ref{fig:howtoblowdowntwistedring} to blow it up into an arbitrary chain of negative clasps as in Figure \ref{fig:3223-3}.

\begin{figure}
     \centering
     \begin{subfigure}[b]{0.25\textwidth}
        	\centering
		\includegraphics[width=\textwidth]{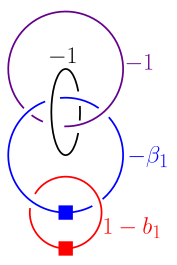}
		\caption{}
		\label{fig:3223-1}
     \end{subfigure}
     \hfill
     \begin{subfigure}[b]{0.3\textwidth}
        	\centering
		\includegraphics[width=\textwidth]{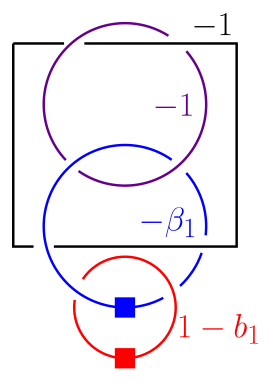}
		\caption{}
		\label{fig:3223-2}
     \end{subfigure}
     \hfill
     \begin{subfigure}[b]{0.4\textwidth}
        	\centering
		\includegraphics[width=\textwidth]{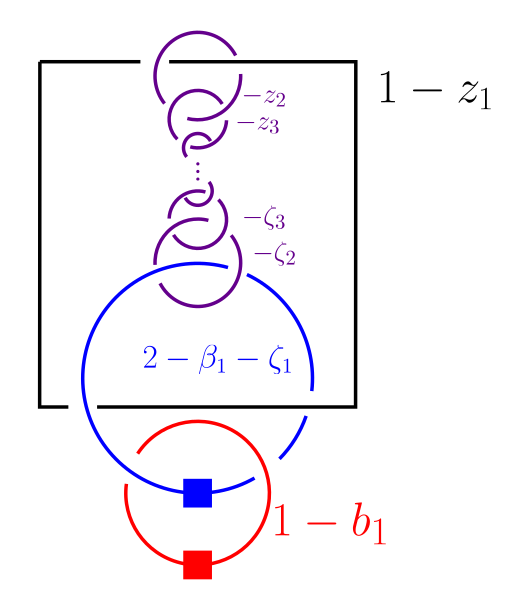}
		\caption{}
		\label{fig:3223-3}
     \end{subfigure}
        \caption{Creating a $\Z_2$-equivariant Kirby diagram of the Figure \ref{fig:3223general} with two extra $2$-handles.}
        \label{fig:3223start}
\end{figure}

Now, zoom into the lower part of Figure \ref{fig:3223-3} and note that it looks like Figure \ref{fig:3braidtwist-1}. It is isotopic to Figure \ref{fig:3braidtwist-2}, which clearly blows up to Figure \ref{fig:3braidtwist-3}. This shows that Figure \ref{fig:3223-3} blows up to Figure \ref{fig:3223-4}. Applying an isotopy of the link gives Figure \ref{fig:3223-5}. The green and the black components are now linked positively. The $-1$-blowup that gets rid of this linking introduces a new component that links to the green and the black components with a negative and positive clasp respectively. Repeated blow-ups thus give us a chain with all clasps negative except the lowest one. We conclude that Figure \ref{fig:3223-6} is a $\Z/2\Z$-equivariant blow-up of Figure \ref{fig:3223-5} and hence of the $(0,0)$ surgery on the $2$-component unlink, but we may also note that removing the two $-1$-weighted components in the ``middle'' of the purple and turquoise chains gives us a tree-shaped plumbing, namely the one in Figure \ref{fig:3223general}.
\end{proof}

\begin{figure}
     \centering
     \begin{subfigure}[b]{0.2\textwidth}
        	\centering
		\includegraphics[width=\textwidth]{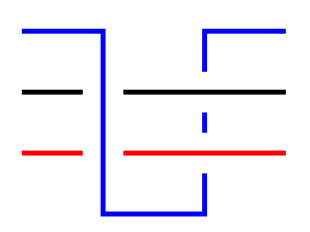}
		\caption{}
		\label{fig:3braidtwist-1}
     \end{subfigure}
     \hfill
     \begin{subfigure}[b]{0.5\textwidth}
        	\centering
		\includegraphics[width=\textwidth]{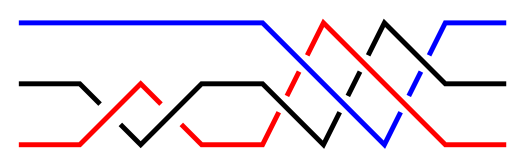}
		\caption{}
		\label{fig:3braidtwist-2}
     \end{subfigure}
     \hfill
     \begin{subfigure}[b]{0.2\textwidth}
        	\centering
		\includegraphics[width=\textwidth]{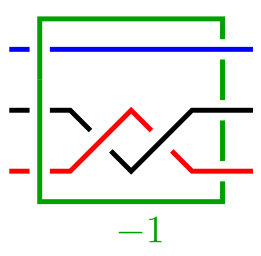}
		\caption{}
		\label{fig:3braidtwist-3}
     \end{subfigure}
        \caption{Zooming in on a part of Figure \ref{fig:3223-3} and blowing up.}
        \label{fig:3braidtwist}
\end{figure}

\begin{figure}
     \centering
     \begin{subfigure}[b]{0.3\textwidth}
        	\centering
		\includegraphics[width=\textwidth]{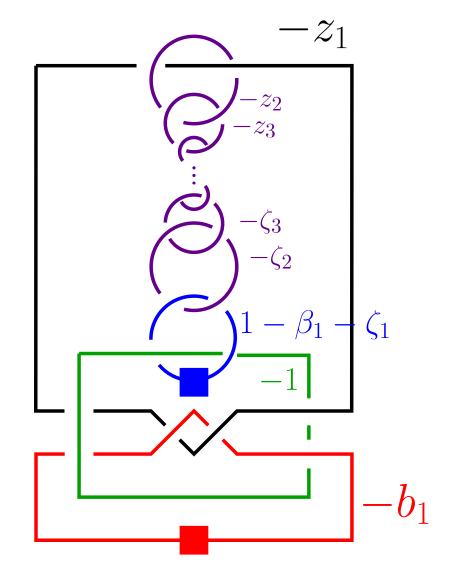}
		\caption{}
		\label{fig:3223-4}
     \end{subfigure}
     \hfill
     \begin{subfigure}[b]{0.3\textwidth}
        	\centering
		\includegraphics[width=\textwidth]{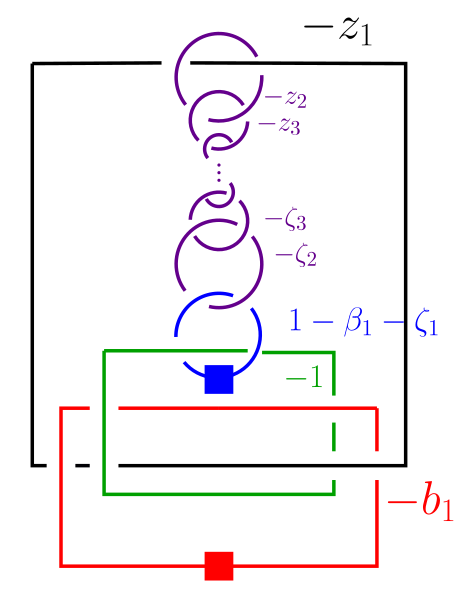}
		\caption{}
		\label{fig:3223-5}
     \end{subfigure}
     \hfill
     \begin{subfigure}[b]{0.3\textwidth}
        	\centering
		\includegraphics[width=\textwidth]{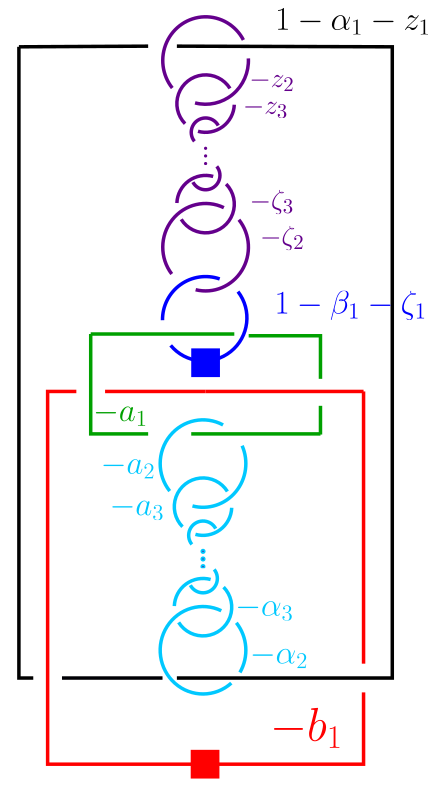}
		\caption{}
		\label{fig:3223-6}
     \end{subfigure}
        \caption{Creating a $\Z/2\Z$-equivariant Kirby diagram of the Figure \ref{fig:3223general} with two extra $2$-handles.}
        \label{fig:3223end}
\end{figure}

\subsection{$(-2,-2,-3,-4)$}

In this subsection, we show the following proposition:

\begin{prop}
Every $3$-manifold described by the graph in Figure \ref{fig:liscagraph2234extendedgeneral} bounds a rational homology $4$-ball.
\end{prop}

This case will be shown in a much simpler way than the one used in Subsections \ref{subsec:32333} and \ref{subsec:3223}. By Proposition \ref{prop:Xhas10n1n2handles}, it is enough to show that performing two integral surgeries on the graphs in Figure \ref{fig:liscagraph2234extendedgeneral} gives us $(S^1 \times S^2) \# (S^1 \times S^2)$.

\begin{proof}
Figure \ref{fig:2234blowdowns1} shows the ``spine'' of Figure \ref{fig:liscagraph2234extendedgeneral} as the orange, magenta, green, red, teal, and violet rings. Note that the complementary leg pairs are reduced to dark brown and olive green squares here. The weights of the rings with squares might be affected by the growth of the complementary legs. The blue ring with weight $-1$ is the first surgery we perform. This first choice of surgery is suggested to us by \cite[Figures 17(5) and 17(6)]{bakerbucklecuona}, which provide equivariant surgeries on \textit{linear} expansions of the graph $(-2,-2,-3,-4)$ that yield $(S^1 \times S^2)$.

Blowing down the $-1$-weighted loop gives us Figure \ref{fig:2234blowdowns2}. Now the red loop has weight $-1$ and we may blow it down. We continue blowing down the chain of $-2$s that follows until we reach Figure \ref{fig:2234blowdowns3}. We blow down the orange loop and obtain Figure \ref{fig:2234blowdowns4}.

Everything would be well now if only the little squares were not there, as we would have been able to blow down the figure to one $0$-framed unknot. However, because we might have grown a two pairs of complementary legs on this figure, Figure \ref{fig:2234blowdowns4} might not actually represent a Kirby diagram with any $-1$-framed loops to blow down.

\begin{figure}
     \centering
     \begin{subfigure}[b]{0.9\textwidth}
        	\centering
		\includegraphics[width=\textwidth]{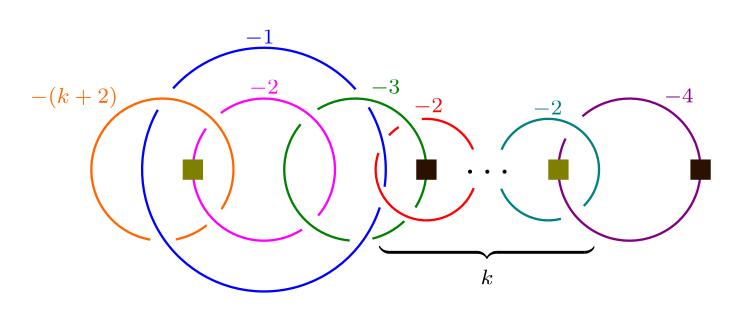}
		\caption{}
		\label{fig:2234blowdowns1}
     \end{subfigure}
     \\
     \begin{subfigure}[b]{0.9\textwidth}
        	\centering
		\includegraphics[width=\textwidth]{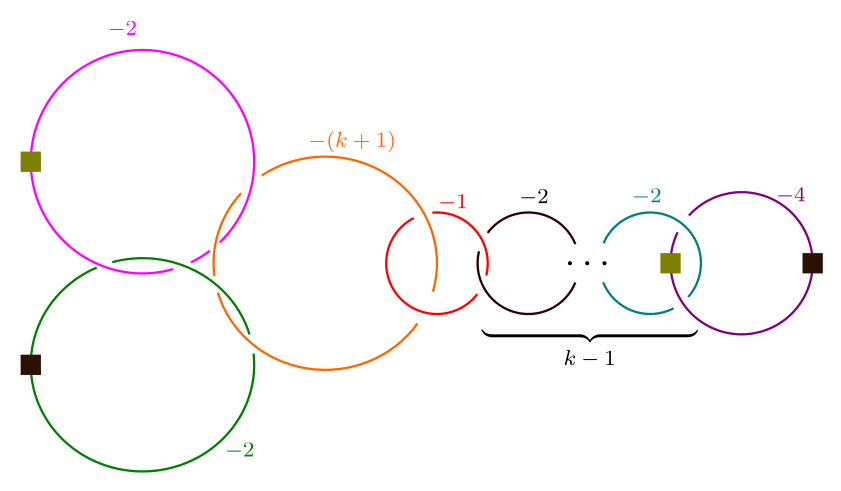}
		\caption{}
		\label{fig:2234blowdowns2}
     \end{subfigure}
     \\
     \begin{minipage}{.4\textwidth}
     \begin{subfigure}[b]{\linewidth}
        	\centering
		\includegraphics[width=\linewidth]{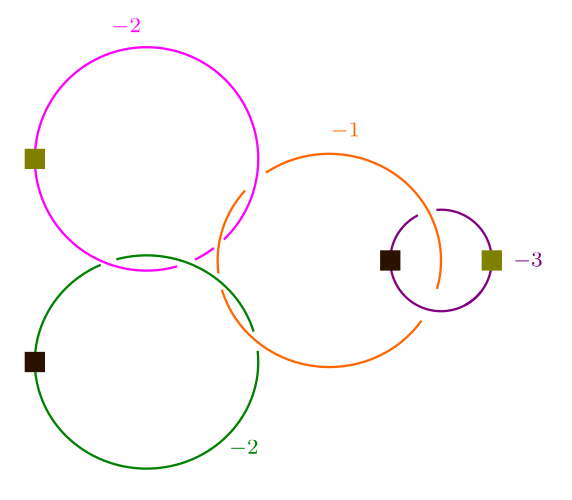}
		\caption{}
		\label{fig:2234blowdowns3}
     \end{subfigure}
     \end{minipage}
     \begin{minipage}{.33\textwidth}
     	\begin{subfigure}[b]{\linewidth}
        		\centering
			\includegraphics[width=\linewidth]{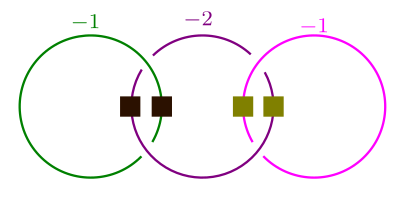}
			\caption{}
			\label{fig:2234blowdowns4}
     	\end{subfigure}
     	\begin{subfigure}[b]{\linewidth}
        		\centering
			\includegraphics[width=\linewidth]{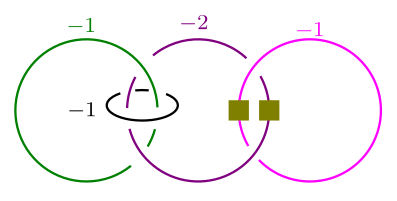}
			\caption{}
			\label{fig:2234blowdowns5}
     	\end{subfigure}
     \end{minipage}
        \caption{The blue component in Subfigure~(a) and the black one in Subfigure~(e) are the two surgeries performed to transform the $3$-manifolds in Figure \ref{fig:liscagraph2234extendedgeneral} into $(S^1 \times S^2) \# (S^1 \times S^2)$.}
        \label{fig:2234blowdowns}
\end{figure}

Instead, we will find another surgery to perform and obtain $(S^1 \times S^2) \# (S^1 \times S^2)$. Here, it's easier to reason backwards. First, note that the $3$-manifold in Figure \ref{fig:2234blowdowns5} is $(S^1 \times S^2) \# (S^1 \times S^2)$. This can be seen by blowing down the black $-1$-loop, which would separate the green loop (now $0$-framed) from the rest. The rest will be a $(-1,-1)$-chain with two complementary legs grown on it, which always blows down to $(0)$. Now, note that repeated blow-ups of Figure \ref{fig:2234blowdowns5} near the black $-1$-framed loop allows us to obtain Figure \ref{fig:2234blowdowns4} with an extra $-1$-framed surgery. This extra $-1$-framed surgery is the one we need to perform in order to obtain $(S^1 \times S^2) \# (S^1 \times S^2)$, completing our proof with The Simple Method.
\end{proof}

While all the links of Figure \ref{fig:2234blowdowns} appear possible to arrange in an equivariant way, the reader should note that we did not equivariantly add the two extra $2$-handles at the same time. Finding an appropriate equivariant diagram $D'$ for using Proposition \ref{prop:complicatedmethod} has proven difficult. It is currently unknown if $K_D$ is $\chi$-slice for any equivariant Kirby diagram $D$ of $X_\Gamma$.


\section{Proof of Theorem \ref{thm:torusknotlist}} \label{sec:torusknots}

In this section, we prove Theorem \ref{thm:torusknotlist} by studying the intersection between the graphs in Figures \ref{fig:3223general}, \ref{fig:32333general}, and \ref{fig:liscagraph2234extendedgeneral}, and plumbing graphs of positive rational surgeries on positive torus knots. First we describe the plumbing graphs of the surgeries on torus knots, and then we go through the intersections with the graphs in Figures  \ref{fig:3223general}, \ref{fig:32333general} and \ref{fig:liscagraph2234extendedgeneral} one by one. The change of order compared to Section \ref{sec:analysis} is because some families obtained from Figure \ref{fig:32333general} are subfamilies of families obtained from Figure \ref{fig:3223general}.

\subsection{Plumbing Graphs of Rational Surgeries on Torus Knots}

\begin{figure}[H]
\centering
\includegraphics[width=.3\linewidth]{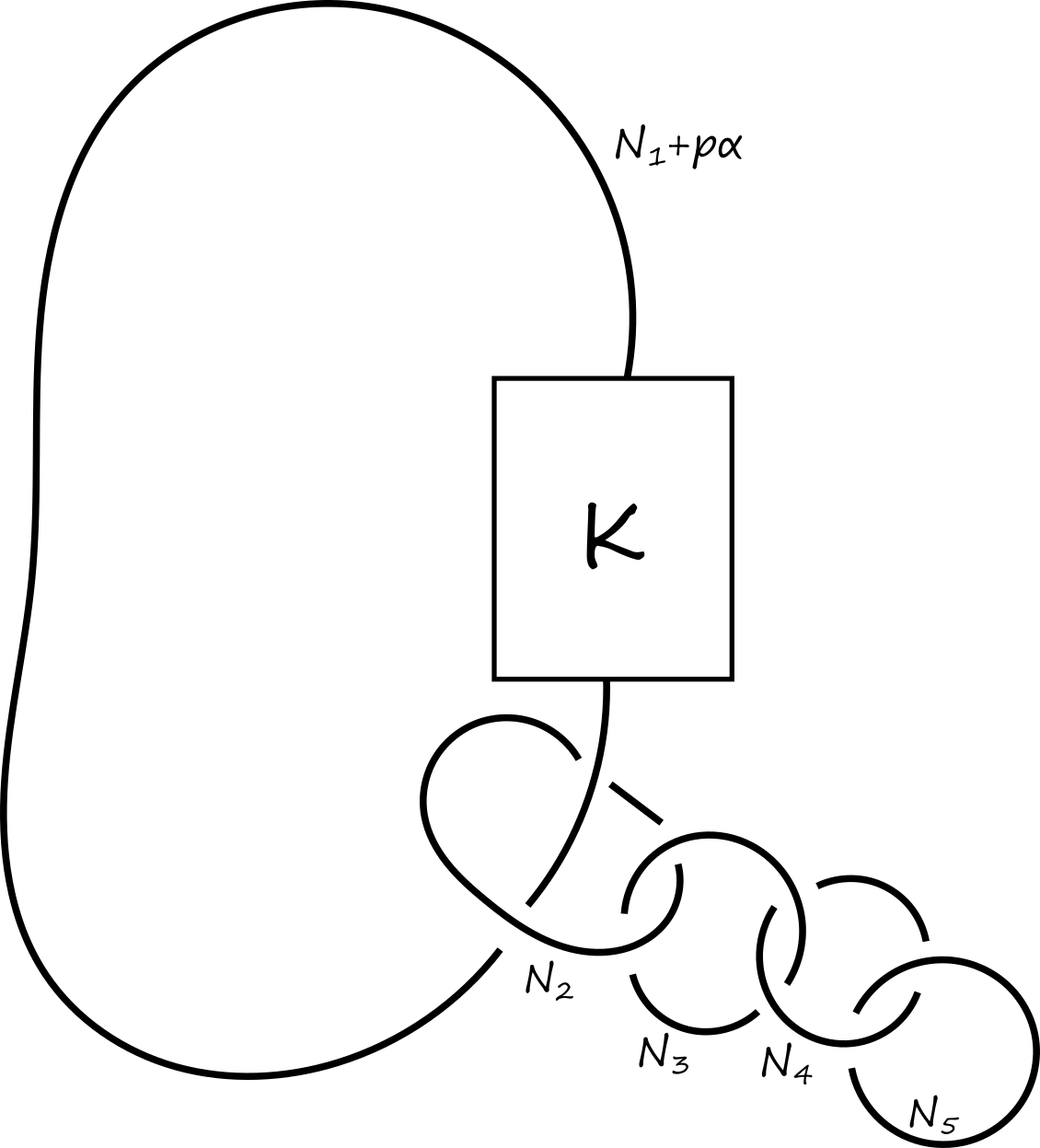}
\caption{A Kirby diagram with boundary $S^3_n(T(p,\alpha))$ where $n=[N_1+p\alpha, N_2, \dots , N_k]^-$ and $K=T(p,\alpha)$, here drawn for $k=5$.}
\label{fig:beforealggeo}
\end{figure}

In order to find the intersection between the plumbing graphs of rational surgeries on torus knots and the graphs obtained from Lisca's graphs by repeated GOCL and IGOCL moves, we need to know what the plumbing graphs of rational surgeries on torus knots look like. Let $n>0$ be a \textit{rational} number. We want to find a plumbing graph for $S^3_n(T(p, \alpha))$. We can write $n=[N_1+p\alpha, N_2, \dots , N_k]^-$ for $N_2,\dots, N_k \geq 2$. The $3$-manifold $S_n(T(p,\alpha))$ bounds the 4-manifold in Figure \ref{fig:beforealggeo}, which is positive-definite if $n>0$. Now, we will use the same technique as in \cite[Section 3]{lokteva2021surgeries} in order to produce a definite plumbing graph. In the process, we need to measure how far we are from being definite, so the following definition is useful.

\begin{dfn}
The \textbf{positive/negative index} of a $4$-manifold is the number of positive/negative eigenvalues of its intersection form.
\end{dfn}

The argument of \cite[Section 3]{lokteva2021surgeries} that the blow-ups decrease the surgery coefficient by a constant still holds to show that $S^3_n(T(p,\alpha))$ bounds the 3-manifold described by the graph in Figure \ref{fig:rationalsurgeryontorusknot}.

\begin{figure}[H]
\centering
\tikzfig{rationalsurgeryontorusknot}
\caption{A plumbing graph of $S^3_n(T(p,\alpha))$, where $\alpha>p$. Here $[1,c_2,...,c_s]^-=p/\alpha$, $[d_1,\dots,d_t]^-=\alpha/p$ and $[N_1,\dots,N_k]^-=N=n-p\alpha$. In particular, the pair of fractions $([c_2,\dots,c_s]^-, [d_1,\dots,d_t]^-)$ are complementary. Also, we can write $[(c_2,c_3,\dots,c_s)=(-2^{[d_1-2]},a_1+1, a_2, \dots, a_r)$ so that $[a_1,\dots,a_r]^-$ and $[d_2,\dots, d_t]^-$ are complementary.}
\label{fig:rationalsurgeryontorusknot}
\end{figure}

The positive index of this graph is $k$ by the same logic as in \cite[Section 3]{lokteva2021surgeries}. To obtain a definite graph, we will need the following generalisation of the algorithm in \cite[Figure 2]{lokteva2021surgeries}:

\begin{prop}
Let $\Gamma$ be a tree-shaped plumbing graph containing a chain (a connected linear subgraph with no nodes, that is vertices of degree greater than $2$) $(-\alpha_1,\dots,-\alpha_k)$, as in Figure \ref{fig:changingorientation1}. Let $\Gamma'$ be the graph $\Gamma$ with the chain substituted by the chain $(\beta_1,\dots, \beta_j)$, for $[\alpha_1,\dots, \alpha_k]^-$ and $[\beta_1,\dots, \beta_j]^-$ complementary fractions, and the weight of the vertices adjacent to the chain increased by 1. Then $Y_\Gamma=Y_{\Gamma'}$. Moreover, $b^2_+(X_{\Gamma'})=b^2_+(X_\Gamma)+j$ and $b^2_-(X_{\Gamma'})=b^2_-(X_\Gamma)-k$.
\label{prop:changingorientation}
\end{prop}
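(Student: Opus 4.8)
The plan is to prove Proposition~\ref{prop:changingorientation} by reducing it to a repeated application of Neumann's plumbing calculus moves, with the bookkeeping of Euler characteristics tracked explicitly via the Riemenschneider diagram. The key observation is that the ``substitute a chain by its Riemenschneider dual'' operation is not a single calculus move but the composite effect of an algorithm: (1) blow up with a $(+1)$-framed unknot linking the rightmost chain entry whose absolute value exceeds $1$ (equivalently, adjacent to the last vertex on the chain that is not a $-1$), (2) blow down all $(-1)$-weighted vertices that have appeared, and (3) repeat until the chain has been entirely converted. Each such blow-up/blow-down step is a Neumann move and hence preserves the boundary $3$-manifold $Y_\Gamma$, so the composite does too.

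First I would set up the local picture precisely: a chain $(-\alpha_1,\dots,-\alpha_k)$ embedded in an arbitrary plumbing graph $\Gamma$, with the two end-vertices of the chain possibly attached to other parts of $\Gamma$ (and the proposition explicitly allows the chain not to separate $\Gamma$). I would note that all the calculus moves used are \emph{local} — they only alter the chain and the weights of vertices immediately adjacent to it — so the rest of $\Gamma$ plays no role beyond receiving the $+1$ weight adjustments at the two attaching vertices. Then I would verify the core claim that the algorithm (1)–(3), applied to a chain $(-\alpha_1,\dots,-\alpha_k)$ with all $\alpha_i\geq 2$, terminates with the chain $(\beta_1,\dots,\beta_j)$ where $[\alpha_1,\dots,\alpha_k]^-$ and $[\beta_1,\dots,\beta_j]^-$ are complementary. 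This is where the Riemenschneider diagram does the work: I would show that each pass of the algorithm corresponds to reading off one more column (or row) of the Riemenschneider diagram of $[\alpha_1,\dots,\alpha_k]^-$, so that when the diagram is exhausted we have exactly read off the dual fraction — this is essentially Riemenschneider's theorem reinterpreted dynamically, and the excerpt already flags that ``following the Riemenschneider diagram, we see that this algorithm gradually substitutes a sequence by its Riemenschneider dual.''

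For the Euler characteristic count, I would track $b_2$, $b_+^2$ and $b_-^2$ step by step. A $(+1)$-blow-up adds one generator of positive square, so it increases $b_+^2$ by $1$ and leaves $b_-^2$ unchanged; a $(-1)$-blow-down removes one generator of negative square, decreasing $b_-^2$ by $1$ and leaving $b_+^2$ unchanged. Over the whole algorithm, the total number of $(+1)$-blow-ups equals the number of positive vertices in the final chain, namely $j$ (this is what ``blowing up by $1$ increases the number of positive vertices by $1$'' means, once one checks no positive vertex is ever blown down), and the total number of $(-1)$-blow-downs equals the number of negative vertices eliminated, which starts at $k$ and ends at $0$, so it is $k$. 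Hence $b_+^2(X_{\Gamma'}) = b_+^2(X_\Gamma) + j$ and $b_-^2(X_{\Gamma'}) = b_-^2(X_\Gamma) - k$, as claimed; the rank of $H_2$ changes accordingly and the boundary is unchanged.

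The main obstacle I anticipate is the careful verification that the algorithm behaves as advertised in the \emph{general} (non-separating, arbitrary ambient graph) setting — specifically, confirming that: (a) the intermediate graphs are always honest plumbing graphs to which Neumann's moves apply (no spurious edges, no issues when the chain has length $1$ or when $\alpha_i = 2$ runs appear), (b) the $(+1)$-blow-up can always be performed at the prescribed location and that after blowing down the resulting $-1$'s one genuinely recovers a chain of the predicted shorter "defect," and (c) the signs and the ``increase adjacent weights by $1$'' prescription come out exactly right at the two ends of the chain, including the degenerate case where both ends attach to the \emph{same} vertex. I would handle (b) and the termination cleanly by inducting on the total defect $\sum_i(\alpha_i - 2)$ (or equivalently on the number of points in the Riemenschneider diagram), using the blow-down/blow-up relations of Proposition~\ref{prop:latticeembeddingproperties}(3),(5) as the model for the local behaviour, and I would dispatch the boundary-manifold invariance by simply citing that each move in the algorithm is among Neumann's calculus moves \cite[Page 305]{neumann89}.
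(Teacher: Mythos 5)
Your proposal matches the paper's argument: the paper justifies this proposition by exactly the same algorithm (repeated $(+1)$-blow-up next to the rightmost chain element, blow down all resulting $-1$'s, repeat), citing Neumann's calculus for boundary invariance, using the Riemenschneider diagram to see that the chain is replaced by its dual, and counting $j$ blow-ups and $k$ blow-downs to get the changes in $b_+^2$ and $b_-^2$. Your additional care about locality, termination, and degenerate cases only elaborates on what the paper leaves implicit.
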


\begin{figure}[H]
\begin{minipage}{\textwidth}
  \begin{subfigure}{\linewidth}
    \centering
    \tikzfig{changingorientation1}
    \caption{Changing a negative chain...}
    \label{fig:changingorientation1}
  \end{subfigure}
  \begin{subfigure}{\linewidth}
    \centering
    \tikzfig{changingorientation2}
    \caption{... to a positive one.}
    \label{fig:changingorientation2}
  \end{subfigure}
\end{minipage}%
\caption{The graphs above bound the same 3-manifold if $[\alpha_1,\dots, \alpha_k]^-$ and $[\beta_1,\dots, \beta_j]^-$ are complementary fractions.}
\label{fig:changingorientation}
\end{figure}

\begin{example*}
Before sketching the proof, we will provide an example of the algorithm that we use to change such a chain. Start with the linear graph $(-2,-4,-2)$. Right now, all vertices have negative weights. We want to introduce a positively weighted vertex. Let us perform a $1$-blow-up. We obtain $(1,-1,-4,-2)$. Now, we blow down the $-1$ and obtain $(2,-3,-2)$. We perform a $1$-blow-up between the $2$ and the $-3$ and obtain $(3,1,-2,-2)$. Blowing up a $1$ again between the last positively weighted vertex and the first negatively weighted one gives us $(3,2,1,-1,-2)$. We blow down the $-1$ to get $(3,2,2,-1)$ and again to obtain $(3,2,3)$. We note that every time we perform a $1$-blow-up, we increase both the positive index and the number of positive vertices by $1$, and every time we perform a $-1$-blowdown, we decrease both the negative index and the number of negative vertices by $1$. Thus, changing these $3$ negative vertices into $3$ positive ones decreased the negative index by $3$ and increased the positive index by $3$.
\end{example*}

\begin{proof}[Proof sketch]
This proposition follows from the fact that blow-ups and blow-downs do not change the boundary $3$-manifold, together with the algorithm of 1) performing a $1$-blow-up at the right of the rightmost chain element greater than $1$, 2) blowing down any $-1$-weighted vertices, and 3) repeating. Following the Riemenschneider diagram, we see that this algorithm gradually substitutes a sequence by its Riemenschneider dual. Blowing up by $1$ increases both the positive index and the number of vertices with positive weight by $1$, and blowing down a $-1$ decreases both the number of vertices with negative weight and the negative index by $1$. Thus, substituting the $k$ negative-weighted vertices by $j$ positive-weighted ones substracts $k$ from the negative index and adds $j$ to the positive index.
\end{proof}

If $N>1$ and thus $N_1\geq 2$, we can use Proposition \ref{prop:changingorientation} to substitute the chain $(N_1,\dots,N_k)$ with its negative Riemenschneider complement $(-M_1,\dots,-M_j)$ and obtain the negative definite graph in Figure \ref{fig:rationalsurgeryontorusknotNpos}. If $0<N<1$, then the sequence $(N_1,\dots, N_k)$ starts with a $1$ possibly followed by some $2$'s that we can blow down before turning the rest of the chain negative. This will once again give us a negative definite graph, namely the one in Figure \ref{fig:rationalsurgeryontorusknotNposlessthan1}.

\begin{figure}
\centering
\tikzfig{rationalsurgeryontorusknotNpos}
\caption{A negative definite plumbing graph of $S^3_n(T(p,\alpha))$, where $N=n-p\alpha>1$ and $\alpha>p$. Here $[1,c_2,\dots,c_s]^-=p/\alpha$, $[d_1,\dots,d_t]^-=\alpha/p$ and if $N=n-p\alpha=a/b$ with $a,b \in \Z_{>0}$, then $[M_1,\dots,M_j]^-=\frac{a}{a-b}$.}
\label{fig:rationalsurgeryontorusknotNpos}
\end{figure}

\begin{figure}
\centering
\tikzfig{rationalsurgeryontorusknotNposlessthan1}
\caption{A negative definite plumbing graph of $S^3_n(T(p,\alpha))$, where $\alpha>p$ and $0<N<1$. Here $[1,c_2,\dots,c_s]^-=p/\alpha$, $[d_1,\dots,d_t]^-=\alpha/p$ and the fraction $[P_1,\dots,P_j]^-$ is complementary to $\frac{1}{1-N}=[N_2,\dots,N_k]^-$. In fact, that means that $N=\frac{1}{[P_1,\dots,P_j]^-}$.}
\label{fig:rationalsurgeryontorusknotNposlessthan1}
\end{figure}

If $N<0$, that is $N_1\leq 0$, then turning the positively-weighted vertices $(N_2, \dots , N_k)$ negative will not be enough to decrease the positive index to $0$. Instead, we will use Proposition \ref{prop:changingorientation} to turn the two other legs of our graph positive, and we obtain the graph in Figure \ref{fig:QsurgeryontorusknotN_1isnonpos}, which has negative index 1. If $N_1=0$, we will perform a 0-absorption (see \cite[Proposition 1.1]{neumann89}) and obtain the positive definite graph in Figure \ref{fig:QsurgeryontorusknotN_1is0}. If $N_1\leq 2$, we use Proposition \ref{prop:changingorientation} to turn it into a chain of $2$s and obtain the graph in Figure \ref{fig:QsurgeryontorusknotN_1isatmostminus2}. If $N_1=-1$, we simply blow it down and obtain Figure \ref{fig:QsurgeryontorusknotN_1isatmostminus2}, but with the length of the chain of $2$s being $0$.

\begin{figure}
\centering
\tikzfig{QsurgeryontorusknotN_1isnonpos}
\caption{A plumbing graph of $S^3_n(T(p,\alpha))$, where $\alpha>p$ and $N<0$. Here the negative index is 1, $[d_1,\dots,d_t]^-=\alpha/p$ and $[e_1,\dots,e_r]^-$ is complementary to $[d_2,\dots,d_t]^-$.}
\label{fig:QsurgeryontorusknotN_1isnonpos}
\end{figure}

\begin{figure}
\centering
\tikzfig{QsurgeryontorusknotN_1is0}
\caption{A positive definite plumbing graph of $S^3_n(T(p,\alpha))$, where $\alpha>p$ and $-1<N<0$. Here $[d_1,\dots,d_t]^-=\alpha/p$ and $[e_1,\dots,e_r]^-$ is complementary to $[d_2,\dots,d_t]^-$.}
\label{fig:QsurgeryontorusknotN_1is0}
\end{figure}

\begin{figure}
\centering
\tikzfig{QsurgeryontorusknotN_1isatmostminus2}
\caption{A positive definite plumbing graph of $S^3_n(T(p,\alpha))$, where $\alpha>p$ and $N<-1$. Here $[d_1,\dots,d_t]^-=\alpha/p$ and $[e_1,\dots,e_r]^-$ is complementary to $[d_2,\dots,d_t]^-$, and the tail starts with a chain of $-2$'s of length $-N_1-1$.}
\label{fig:QsurgeryontorusknotN_1isatmostminus2}
\end{figure}

In the graphs of Figures \ref{fig:rationalsurgeryontorusknotNpos} \ref{fig:rationalsurgeryontorusknotNposlessthan1}, \ref{fig:QsurgeryontorusknotN_1is0} and \ref{fig:QsurgeryontorusknotN_1isatmostminus2}, the vertex of degree $3$ is called the node. Removing the node splits the graph into $3$ connected components, of which the top left one is called the \textit{torso}, the bottom left one is called the \textit{leg} and the right one is called the \textit{tail}. This vocabulary is chosen to accord with the vocabulary of \cite{lokteva2021surgeries} on iterated torus knots. We also often talk about the torso, leg and tail collectively as legs. This comes from viewing the graphs as general star-shaped graphs rather than graphs of surgeries on torus knots specifically. (The author recommends looking at a flag of Sicily or Isle of Man for a more precise metaphor.) This vocabulary is generally used by Lecuona, for instance in \cite{lecuonacomplementary} and \cite{lecuonamontesinos}.

We say that two legs of a star-shaped graph are negatively quasi-complementary if either adding one vertex at the end of one leg could make them complementary, and positively quasi-complementary if removing a final vertex from one of the legs could. We say that two legs are complementary if they are either positively or negatively quasi-complementary. Note that the graphs in Figures \ref{fig:rationalsurgeryontorusknotNpos}, \ref{fig:rationalsurgeryontorusknotNposlessthan1}, \ref{fig:QsurgeryontorusknotN_1is0} and \ref{fig:QsurgeryontorusknotN_1isatmostminus2} are exactly the star-shaped graphs with three legs whereof two are quasi-complementary. In the following subsections, we are thus going to look for star-shaped graphs with a pair of quasi-complementary legs among the graphs in Figures \ref{fig:liscagraph2234extendedgeneral}, \ref{fig:3223general} and \ref{fig:32333general}. The following very easy-to-check proposition will come in useful:

\begin{prop} \label{prop:aqcleg}
Suppose $Q/P=[a_1,\dots,a_n]^-$ and $(-a_1,-a_2,\dots,-a_n)$ is either the leg or torso of the plumbing graph of $S_r^3(T(p,\alpha))$, a positive rational surgery on a positive torus knot. (Here $-a_n$ is the weight of the vertex adjacent to the node.) Then $\alpha/p$ is one of the following: \begin{itemize}
\item $\frac{Q}{P}$,
\item $\frac{Q}{Q-P}$,
\item $\frac{(l+1)Q+P}{Q}$ for some $l\geq 0$ or
\item $\frac{(l+2)Q-P}{Q}$ for some $l\geq 0$.
\end{itemize}
\end{prop}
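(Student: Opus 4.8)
The statement is a structural fact about the plumbing graphs of positive rational surgeries on positive torus knots, catalogued in Figures~\ref{fig:rationalsurgeryontorusknotNpos}, \ref{fig:rationalsurgeryontorusknotNposlessthan1}, \ref{fig:QsurgeryontorusknotN_1is0} and \ref{fig:QsurgeryontorusknotN_1isatmostminus2}. The plan is to go through each of these four pictures and inspect which continued fraction appears as the leg and which as the torso, then read off the possible values of $\alpha/p$. First I would recall from the captions that the torso-and-leg subgraph near the node is always built from the pair $([c_2,\dots,c_s]^-,[d_1,\dots,d_t]^-)$ with $[1,c_2,\dots,c_s]^- = p/\alpha$ and $[d_1,\dots,d_t]^- = \alpha/p$, possibly with the weight of the node-adjacent vertex modified by $\pm 1$ coming from an application of Proposition~\ref{prop:changingorientation} on the tail. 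So the leg is essentially the chain $(-d_1,\dots,-d_t)$ (up to a $\pm 1$ on the last entry), while the torso is the chain $(-c_2,\dots,-c_s)$ (again possibly shifted), and these two are Riemenschneider-complementary.

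\textbf{Key steps.} The computation reduces to the following elementary continued-fraction manipulations. If the given chain $(-a_1,\dots,-a_n)$ with $Q/P=[a_1,\dots,a_n]^-$ is literally the leg $(-d_1,\dots,-d_t)$, then $\alpha/p = [d_1,\dots,d_t]^- = Q/P$. If instead it is the leg with a $+1$ adjustment on the last (node-adjacent) vertex — i.e.\ $(-d_1,\dots,-(d_t+1))$, as happens in Figure~\ref{fig:rationalsurgeryontorusknotNpos} — then one uses the standard identity that incrementing the last entry of a negative continued fraction $[d_1,\dots,d_t]^-$ corresponds to passing to $\frac{Q}{Q-P}$ (equivalently, $\tfrac1{1-P/Q}$), giving $\alpha/p = \tfrac{Q}{Q-P}$; here one must check the precise form of the adjustment in each figure. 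The torso cases require using the complementarity relation: if the chain is the torso $(-c_2,\dots,-c_s)$ with $[c_2,\dots,c_s]^- = Q/P$, then $[d_1,\dots,d_t]^-$ is its complement, and since $\tfrac1{[1,c_2,\dots,c_s]^-} + \text{(something)}$... more precisely, from $[1,c_2,\dots,c_s]^- = p/\alpha$ and the fact that $[1,c_2,\dots,c_s]^- = 1 - \tfrac{1}{[c_2,\dots,c_s]^-} = 1 - P/Q = \tfrac{Q-P}{Q}$, we would get $\alpha/p = \tfrac{Q}{Q-P}$ again — but the genuinely new cases $\tfrac{(l+1)Q+P}{Q}$ and $\tfrac{(l+2)Q-P}{Q}$ arise when the torso carries the $-1$-shift (lowering the node-adjacent weight) together with the chain of $-2$'s of variable length $l$ sitting between node and tail, or from the $(-2^{[d_1-2]}, a_1+1, a_2,\dots,a_r)$ decomposition noted in the caption of Figure~\ref{fig:rationalsurgeryontorusknot}; in each such case one computes the continued fraction of the prefix $(-2,\dots,-2,-(\text{shifted entry}),\dots)$ and uses the identity $[2^{[l]},x,\dots]^- $-type collapses, which shift a fraction $Q/P$ to $\tfrac{(l+1)Q+P}{Q}$ (adding $l$ copies of $-2$ in front after a suitable transformation) or to $\tfrac{(l+2)Q-P}{Q}$ for the opposite orientation. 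I would organize this as a short case table: (i) chain $=$ leg, unshifted $\Rightarrow Q/P$; (ii) chain $=$ leg with $+1$ shift $\Rightarrow Q/(Q-P)$; (iii) chain $=$ torso, accounting for the $-1$ shift and the length-$l$ chain of $-2$'s on the tail side $\Rightarrow (l+1)Q+P$ over $Q$ or $(l+2)Q-P$ over $Q$ depending on which orientation (Figures~\ref{fig:rationalsurgeryontorusknotNpos}/\ref{fig:rationalsurgeryontorusknotNposlessthan1} versus \ref{fig:QsurgeryontorusknotN_1is0}/\ref{fig:QsurgeryontorusknotN_1isatmostminus2}).

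\textbf{Main obstacle.} The conceptual content is light — everything is an exercise in negative continued fractions and Riemenschneider duality, already set up in Section~\ref{sec:complegs} and Proposition~\ref{prop:changingorientation}. The real work, and the thing most likely to introduce sign or off-by-one errors, is bookkeeping: tracking exactly which vertex in each of the four figures is node-adjacent, which weight has been bumped by $\pm 1$ by the orientation-reversal of the tail, and how the variable-length $-2$-chain interacts with the complement of $(d_2,\dots,d_t)$. In particular I would be careful to verify that the four listed fractions for $\alpha/p$ are exhaustive over the four figures and that no figure produces a fifth type; this amounts to checking that the torso in Figures~\ref{fig:QsurgeryontorusknotN_1is0} and \ref{fig:QsurgeryontorusknotN_1isatmostminus2}, after absorbing its length-$l$ $-2$-chain and $\pm1$ shift, has continued fraction exactly of the stated forms. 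Once the correspondence between ``which figure'' and ``which of the four values'' is pinned down, the proposition follows by direct substitution, so I expect the proof to be only a few lines plus possibly a small figure or table pointing at the relevant vertices.
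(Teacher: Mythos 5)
The paper never actually writes out a proof of this proposition (it is introduced as ``very easy-to-check'' and left to the reader), so the comparison is against the direct verification from Figures~\ref{fig:rationalsurgeryontorusknotNpos}, \ref{fig:rationalsurgeryontorusknotNposlessthan1}, \ref{fig:QsurgeryontorusknotN_1is0} and \ref{fig:QsurgeryontorusknotN_1isatmostminus2}. Your overall plan --- a case table over those figures, deciding whether the given chain is the leg or the torso and translating via Riemenschneider duality --- is the right shape, and your torso computation $p/\alpha=[1,c_2,\dots,c_s]^-=1-P/Q$, hence $\alpha/p=\tfrac{Q}{Q-P}$, is exactly how the second listed value arises.

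However, two of the specific identities you plan to lean on are false, and one of them would derail the case table. First, incrementing the node-adjacent entry of $[d_1,\dots,d_t]^-$ does \emph{not} produce $\tfrac{Q}{Q-P}$: for instance $[2,2]^-=3/2$ while $[2,3]^-=5/3\neq 3$. The operation that yields $\tfrac{Q}{Q-P}$ is passing to the Riemenschneider complement, i.e.\ precisely the torso case you already handle; your case (ii) is both redundant and, if it occurred, would give a value of $\alpha/p$ not on the list. In fact no such shifted leg occurs: by Proposition~\ref{prop:changingorientation} the $+1$ adjustments land on the vertices adjacent to the chain being reversed, which in these figures is the tail, so it is the \emph{node} whose weight changes while the leg and torso appear unmodified. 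Second, prepending $l$ copies of $2$ to a chain with fraction $R/S$ gives $\tfrac{(l+1)R-lS}{lR-(l-1)S}$, not $\tfrac{(l+1)R+S}{R}$, so the claimed ``$[2^{[l]},x,\dots]^-$-type collapse'' is not the mechanism. The two $l$-parametrised families instead come from Figures~\ref{fig:QsurgeryontorusknotN_1is0} and \ref{fig:QsurgeryontorusknotN_1isatmostminus2}, whose quasi-complementary pair consists of $(d_1,\dots,d_t)$ and a chain $(e_1,\dots,e_r)$ with $[e_1,\dots,e_r]^-$ complementary to $[d_2,\dots,d_t]^-$ (see their captions, and compare the $(2^{[d_1-2]},a_1+1,\dots,a_r)$ decomposition in the caption of Figure~\ref{fig:rationalsurgeryontorusknot}). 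If the given chain is that second leg, then $[d_2,\dots,d_t]^-$ equals $\tfrac{Q}{Q-P}$ (or $\tfrac{Q}{P}$, depending on which of the two roles it plays), whence $\alpha/p=d_1-\tfrac{1}{[d_2,\dots,d_t]^-}$ equals $\tfrac{(d_1-1)Q+P}{Q}$ or $\tfrac{d_1Q-P}{Q}$ with the integer part $d_1=l+2$ left completely free; that freedom, not a prefix of $-2$'s on the given chain, is where the parameter $l$ comes from.
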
 Note that if $\GCD(P,Q)=1$, then all of these fractions are reduced. However, if $P=Q-1$, then, $\alpha/p=\frac{Q}{Q-P}=Q$ is a degenerate case that we ignore.

\subsection{$(-3,-2,-2,-3)$}

In this subsection, we prove the following: 

\begin{prop}
For all torus knots $T(p,q)$ in Families \ref{enum:onemodp}, \ref{enum:minusonemodp}, \ref{enum:twomodp}, \ref{enum:minustwomodp}, \ref{enum:3223one} and \ref{enum:3223two} of Theorem \ref{thm:torusknotlist}, there exists an $r \in \Q_+$ such that $S^3_r(T(p,q))$ bounds a rational homology ball.
\end{prop}

This is done by considering the intersections between the graphs in Figures \ref{fig:rationalsurgeryontorusknotNpos}, \ref{fig:rationalsurgeryontorusknotNposlessthan1}, \ref{fig:QsurgeryontorusknotN_1is0} and \ref{fig:QsurgeryontorusknotN_1isatmostminus2} (rational surgeries on torus knots) and the graphs in Figure \ref{fig:3223general} (graphs obtainable from $(-3,-2,-2,-3)$ through GOCL moves). Figure \ref{fig:3223general} is symmetric in the $y$-axis, so it is enough to try two of the vertices for trivalency, say the one with weight $1-\beta_1-\zeta_1$ and the one with weight $-a_1$.

If we want to the vertex with weight $1-\beta_1-\zeta_1$ to be the trivalent vertex in one of the Figures \ref{fig:rationalsurgeryontorusknotNpos}, \ref{fig:rationalsurgeryontorusknotNposlessthan1}, \ref{fig:QsurgeryontorusknotN_1is0} and \ref{fig:QsurgeryontorusknotN_1isatmostminus2} , then $l_1=m_1=1$. Hence $\beta_i=\alpha_j=2$ for all $i$ and $j$. Also, $l_2=1$ or $n_1=1$. Suppose that $(-\beta_{m_2}, \dots, -\beta_{2})$ is one of the quasi-complementary legs. Proposition \ref{prop:aqcleg} would generate that $(p,q)$ belongs to Families \ref{enum:onemodp} and \ref{enum:minusonemodp} in Theorem \ref{thm:torusknotlist}. All of these are possible to produce by setting $l_2=1$, which frees us up to choosing $(\zeta_2, \cdots, \zeta_{n_2})$ completely freely.

\begin{figure}
\centering
\includegraphics[width=.7\linewidth]{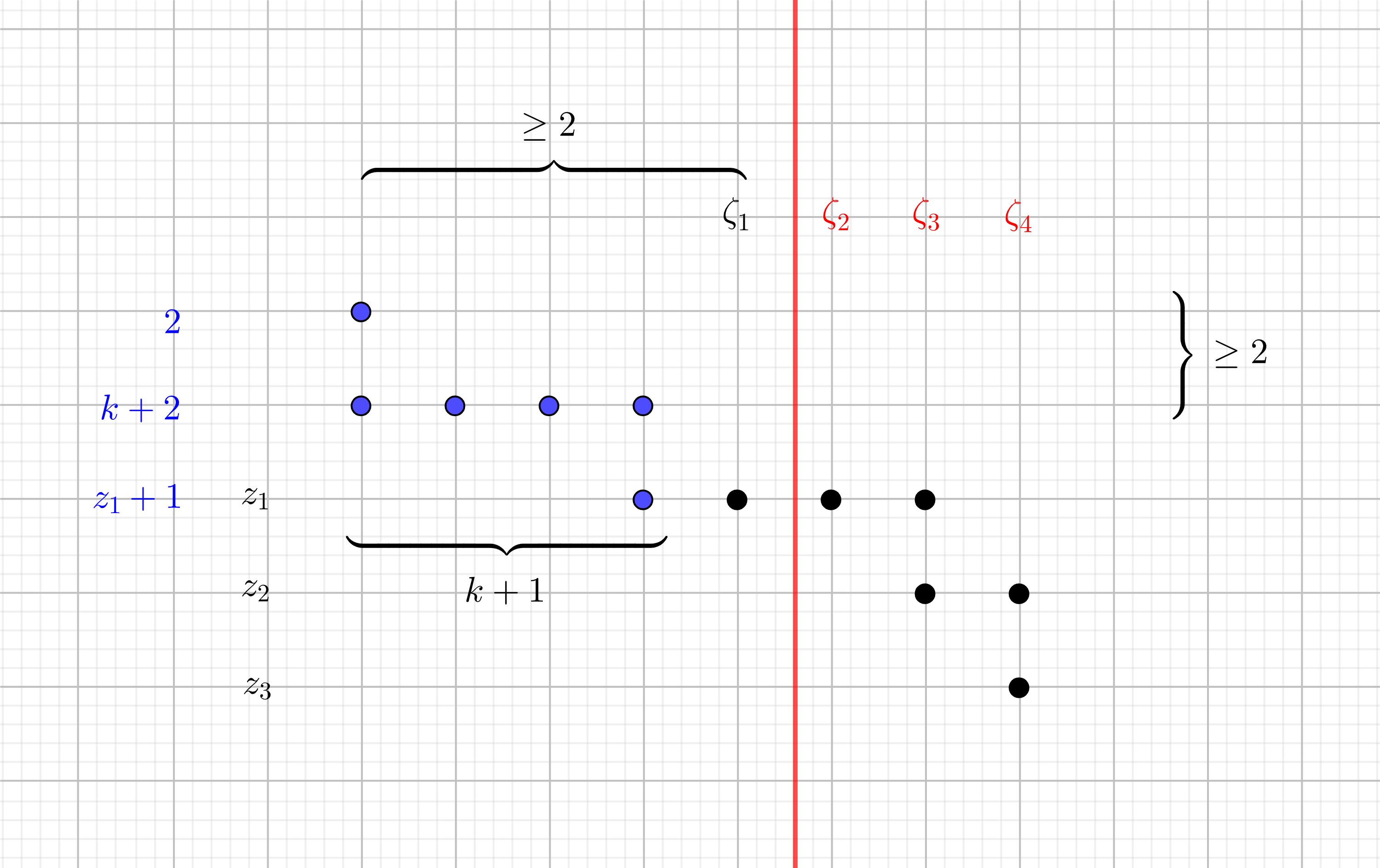}
\caption{It is impossible to choose $k$ so that the length or height difference between the full Riemenshneider diagram and the diagram to the right of the red line is one.}
\label{fig:makegrownlegsqc3223-1}
\end{figure}

Now, we consider what happens if the legs other than $(-\beta_2,\dots, -b_{m_2})$ are quasi-complementary. If $n_1=1$, all $\alpha$s, $\beta$s and $\zeta$s become $-2$, giving us a star-shaped graph with two legs containing nothing but $-2$s, not allowing us out of the families 1, 2, 3 and 4. We consider the case $l_2=1$ instead. We have $a_1=\alpha_1=2$. Let $b_1=k+2$ (so that the leg $(-\beta_2,\dots,-\beta_{m_2})=(-2,\dots,-2)$ has length $k$). We investigate if $(-\zeta_2, \dots, -\zeta_{n_2})$ and $(-2,-(k+2), -z_1-1,-z_2,\dots,-z_{n_1})$ can be quasi-complementary. Consider the diagram in Figure \ref{fig:makegrownlegsqc3223-1}. The black dots represent the Riemenschneider diagram of $(z_1,\dots, z_{n_1})$ and $(\zeta_1, \dots, \zeta_{n_2})$. The blue dots are added in such a way that they together with the black dots form the Riemenschneider diagram of $(-2,-(k+2), -z_1-1,-z_2,\dots,-z_{n_1})$. Call it the BB diagram. The Riemenschneider diagram of $(-\zeta_2, \dots, -\zeta_{n_2})$ is to the right of the red line. Call it the RR diagram. Now we wonder if we can choose the black dots and $k$ in such a way that the BB diagram is just the RR diagram plus one row or column at the end. However, we see that it is impossible to create a difference of one between the length of one leg and the complement of the other leg.

\begin{figure}
\centering
\includegraphics[width=.7\linewidth]{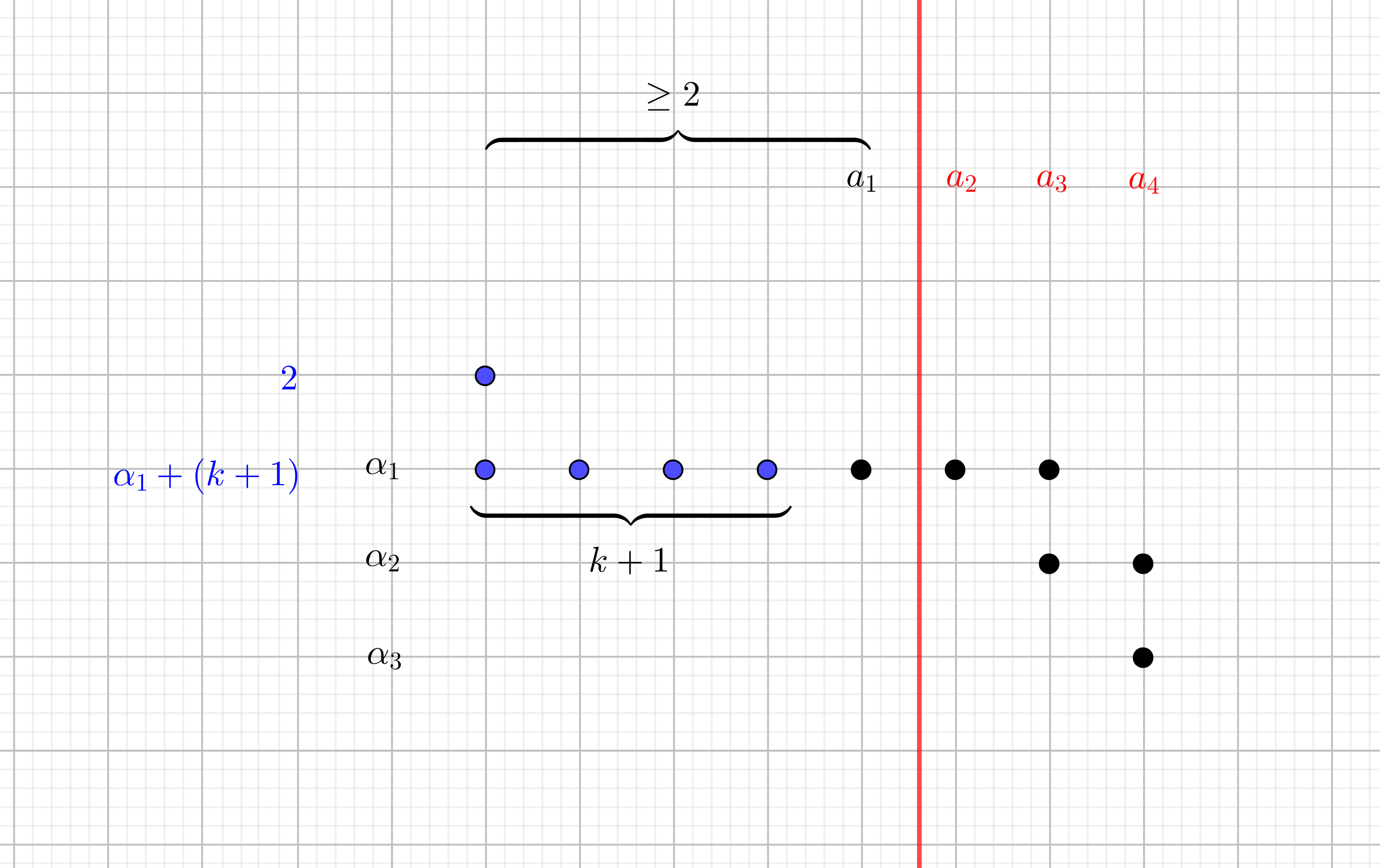}
\caption{Riemenschneider diagram of the quasi-complementary legs $(-a_2,\dots,-a_{l_1})$ and $(-b_1, 1-\alpha_1-z_1, -\alpha_2, \dots, -\alpha_{l_2})$ in Figure \ref{fig:3223general} when $m_1=n_1=m_2=1$.}
\label{fig:makegrownlegsqc3223-2}
\end{figure}

\begin{figure}
\centering
\includegraphics[width=.7\linewidth]{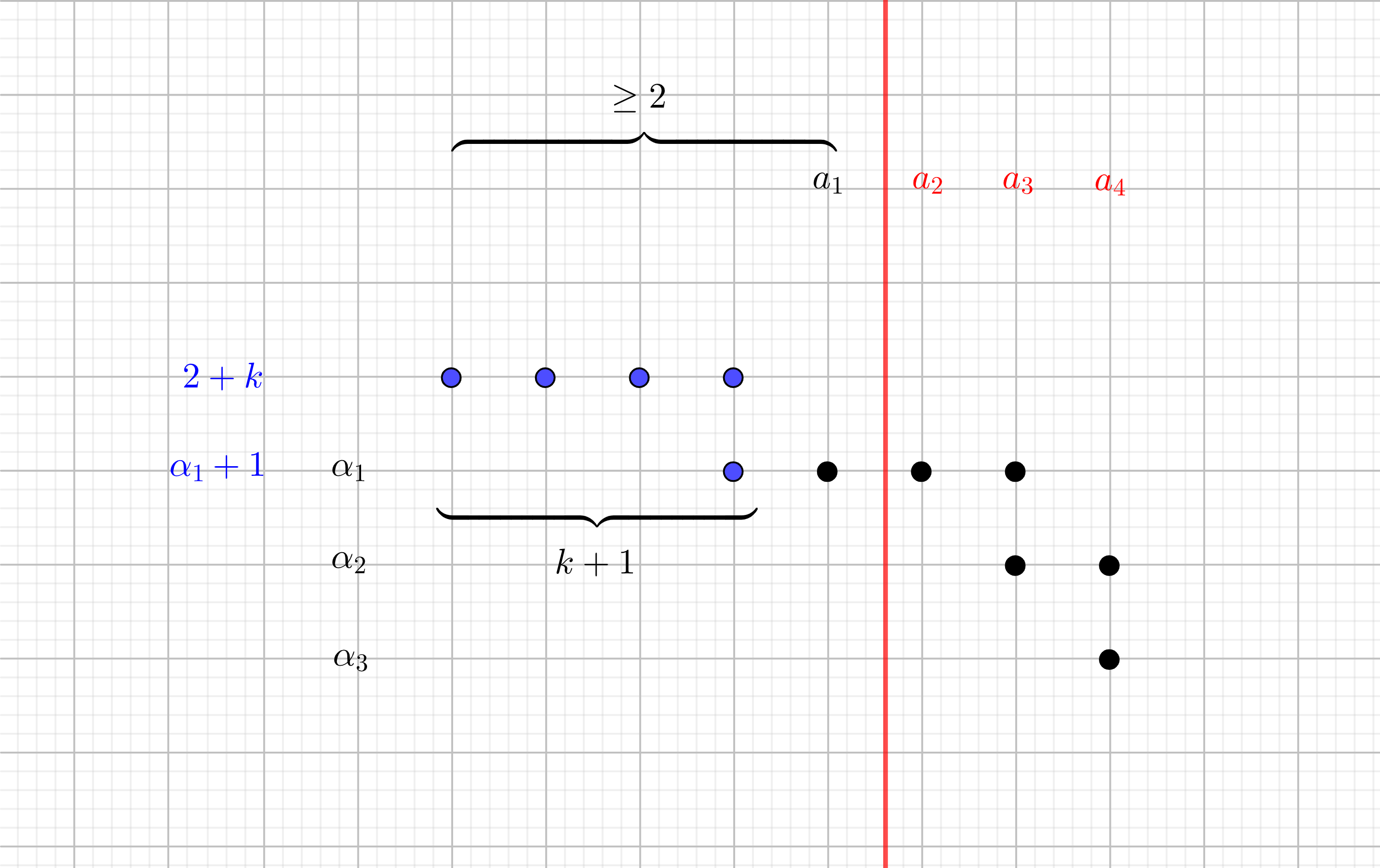}
\caption{Riemenschneider diagram of the quasi-complementary legs $(-a_2,\dots,-a_{l_1})$ and $(-b_1, 1-\alpha_1-z_1, -\alpha_2, \dots, -\alpha_{l_2})$ in Figure \ref{fig:3223general} when $m_1=n_1=n_2=1$.}
\label{fig:makegrownlegsqc3223-3}
\end{figure}

Now, consider the vertex labelled $-a_1$ being trivalent instead. This means that $m_1=1$. Also, either $n_1=1$ or $l_2=1$. First, assume that $n_1=1$. This means that $\beta_1=\cdots=\beta_{m_2}=\zeta_1=\cdots=\zeta_{n_2}=2$. Either $n_2$ or $m_2$ must be $1$. No matter the choice, the left leg becomes $(-2,\dots, -2, -3)$ from the outside. If it is included in a pair of quasi-complementary legs, which we can always ensure since we can choose $(\alpha_2,\dots, \alpha_{l_1})$ freely, we can use Proposition \ref{prop:aqcleg} to get all of families \ref{enum:twomodp} and \ref{enum:minustwomodp}. In the more interesting case (where the leftmost leg is not one of the quasi-complementary ones) $(a_2,\dots,a_{l_1})$ must be quasi-complementary (from the inside) either to $(2,1+k+\alpha_1,\dots,\alpha_{l_2})$ for some $k\geq 0$ (depicted in Figure \ref{fig:makegrownlegsqc3223-2}) or to $(2+k,1+\alpha_1,\dots,\alpha_{l_2})$ for some $k\geq 0$ (depicted in Figure \ref{fig:makegrownlegsqc3223-3}), depending on whether $n_2$ or $m_2$ is equal to $1$.

\begin{figure}
\centering
\includegraphics[width=\linewidth]{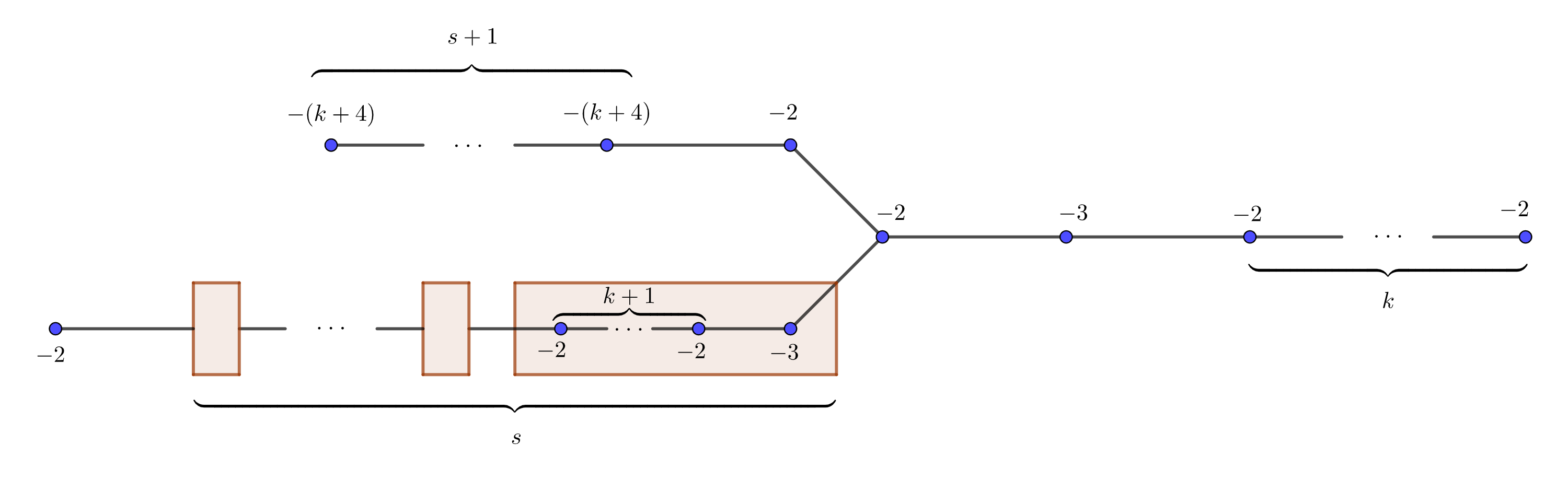}
\caption{A strange two-parameter family of rational surgeries on positive torus knot that bound rational homology 4-balls.}
\label{fig:coolgrowthof3223}
\end{figure}

Let us resolve the first case. Again, in order for $(a_2,\dots,a_{l_1})$ to be quasi-complementary to $(2,1+k+\alpha_1,\dots,\alpha_{l_2})$ for some $k\geq 0$, the BB diagram should be the same as the RR diagram plus an extra row or column at the end. Since the part to the left of the red line has at least two columns, it must be an extra row. One solution would be $(\alpha_1, \dots, \alpha_{l_2})=(3)$. If the black diagram has more than one row, we need $\alpha_2=(k+1)+2+1=k+4$. We can add as many rows as we want this way. We get that $(\alpha_1,\dots,\alpha_{l_2})=(3,(k+4)^{[s]})$ and $(a_1,\dots,a_{l_1})=(2,(3,(2)^{[k+1]})^{[s]},2)$, giving us the graph in Figure \ref{fig:coolgrowthof3223}. This graph is of the shape of Figure \ref{fig:QsurgeryontorusknotN_1isatmostminus2}, so it describes $S^3_n(T(p,\alpha))$ for $\alpha / p= [(k+4)^{[s+1]},2]^-$ and $N=n-p\alpha=[-1,(2)^{[k+1]}]^-=-\frac{2k+3}{k+2}$. This corresponds to family \ref{enum:3223one} in Theorem \ref{thm:torusknotlist}. A different formulation of the result is that $S^3_{p\alpha-\frac{2k+3}{k+2}}(T(p,\alpha))$ bounds a rational homology ball for all $p$ and $\alpha$ described by \[
\begin{pmatrix}
\alpha\\
p
\end{pmatrix} = \begin{pmatrix}
k+4 & -1\\
1 & 0
\end{pmatrix}^{s+1} \begin{pmatrix}
2\\
1
\end{pmatrix}
\] for some $s,k \geq 0$. If we fix $s$, then $\alpha$ becomes a degree $s+1$ polynomial in $k$.

\begin{figure}
\centering
\includegraphics[width=\linewidth]{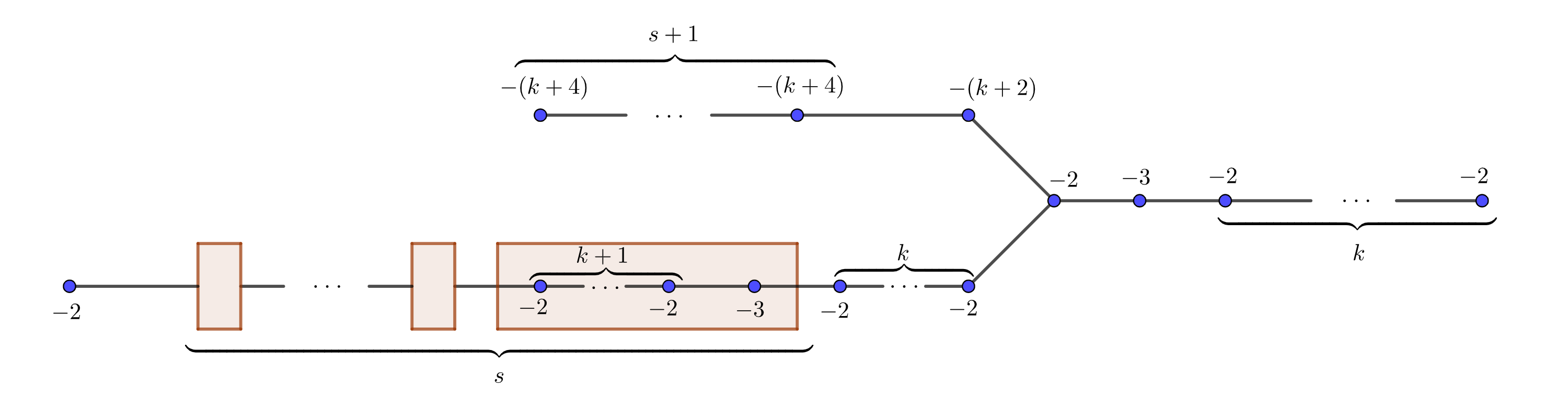}
\caption{A strange two-parameter family of rational surgeries on positive torus knot that bound rational homology 4-balls.}
\label{fig:coolgrowthof3223-2}
\end{figure}

In the second case, that is if $(a_2,\dots,a_{l_1})$ is quasi-complementary to $(2+k,1+\alpha_1,\dots,\alpha_{l_2})$ for some $k\geq 0$, then $(\alpha_1,\dots,\alpha_{l_2})=(k+3,(k+4)^{[s]})$ for some $s\geq 0$. Then the graph becomes as in Figure \ref{fig:coolgrowthof3223-2}. Now $\alpha /p=[(k+4)^{[s+1]}, k+2]^-$, meaning that $S^3_{p\alpha-\frac{2k+3}{k+2}}(T(p,\alpha))$ bounds a rational homology ball for all $p$ and $\alpha$ described by \[
\begin{pmatrix}
\alpha\\
p
\end{pmatrix} = \begin{pmatrix}
k+4 & -1\\
1 & 0
\end{pmatrix}^{s+1} \begin{pmatrix}
k+2\\
1
\end{pmatrix}
\] for some $s,k \geq 0$. This corresponds to Family \ref{enum:3223two} in Theorem \ref{thm:torusknotlist}.

If $l_2=1$ instead of $n_1=1$, then $a_1=\cdots=a_{l_1}=2$. We already know that we can choose surgery coefficients when one of the complementary legs consists of only $-2$s, so we do not need to check that case to formulate Theorem \ref{thm:torusknotlist}. In fact we do not need to check further, as any star-shaped graphs with three legs whereof two are quasi-complementary, the third one consisting only of $-2$s and the node having weight $-2$ is a positive integral surgery on a positive torus knot, which have been classified in \cite{GALL}.

\subsection{$(-3,-2,-3,-3,-3)$}

In this subsection, we prove the following: 

\begin{prop}
For all torus knots $T(p,q)$ in the families \ref{enum:32333cubic} and \ref{enum:32333recursive} of Theorem \ref{thm:torusknotlist}, there exists an $r \in \Q$ such that $S^3_r(T(p,q))$ bounds a rational homology ball.
\end{prop}

This is done by finding the intersections between the graphs in Figures \ref{fig:rationalsurgeryontorusknotNpos}, \ref{fig:rationalsurgeryontorusknotNposlessthan1}, \ref{fig:QsurgeryontorusknotN_1is0} and \ref{fig:QsurgeryontorusknotN_1isatmostminus2} (rational surgeries on torus knots) and the graphs in Figure \ref{fig:32333general} (graphs obtainable from $(-3,-2,-3, -3,-3)$ through GOCL and IGOCL moves).

In Figure \ref{fig:32333general} there are three possibilities for a trivalent vertex. If we choose the vertex of weight $-a_1$, then $m_2=1$ and thus two of the legs are $(-3-k)$ and $(-2,\dots,-2)$. We already know that if one of these is in a quasi-complementary pair, then $(p, \alpha)$ lies in families \ref{enum:onemodp}-\ref{enum:minustwomodp} in Theorem \ref{thm:torusknotlist}, so we get nothing new. Choosing the vertex of weight $-(1+b_1)$ to be trivalent, and noting that we land in families \ref{enum:onemodp}-\ref{enum:minustwomodp} if the left leg $(-3-k,-2)$ is one of the quasi-complementary ones, does however lead us to find that \[ S^3_{p\alpha-\frac{5}{7}}(T(p,\alpha)) \] bounds a rational homology ball for every \[
\begin{pmatrix}
\alpha\\
p
\end{pmatrix} = \begin{pmatrix}
5 & -1\\
1 & 0
\end{pmatrix}^{s+1} \begin{pmatrix}
3\\
1
\end{pmatrix}
\] where $s\geq 0$. This corresponds to family \ref{enum:32333recursive} in Theorem \ref{thm:torusknotlist}. Finally, choosing the vertex of weight $-(1+\alpha_1)$ to be trivalent gives us $m_1=n_1=1$. If the lower leg $(-2,\dots,-2)$ is included in the pair of quasi-complementary legs, we fall into families 1-4 again. We need to investigate when $(-(3+k),-a_1,-(1+b_1))$ can be quasi-complementary to $((-2)^{[b_1-2]},-3,(-2)^{[k]})$.  The Riemenschneider dual of the latter leg is $(-b_1,-(k+2))$, so we need $b_1=a_1$ and $k+2=b_1+1$. Note that we also need $a_1\geq 3$ in order to get a three-legged graph. Let $a=a_1-3$. We get $(-(3+k),-a_1,-(1+b_1))=(-(a+5),-(a+3),-(a+4))$. Our graph is now as in Figure \ref{fig:QsurgeryontorusknotN_1is0}. Thus $\alpha/p=[a+5,a+3,a+4]^-=\frac{a^3+12a^2+45a+51}{a^2+7a+11}$. This correspond to family \ref{enum:32333cubic} in Theorem \ref{thm:torusknotlist}.

The remaining families follow from \cite{lecuonamontesinos} and \cite{GALL}.

\subsection{$(-2,-2,-3,-4)$}

In this subsection, we prove the following: 

\begin{prop}
For all torus knots $T(p,q)$ in the families \ref{enum:2234gall}, \ref{enum:2234one} and \ref{enum:2234two} of Theorem \ref{thm:torusknotlist}, there exists an $r \in \Q$ such that $S^3_r(T(p,q))$ bounds a rational homology ball.
\end{prop}

This is done by determining the intersection between the graphs in Figures \ref{fig:rationalsurgeryontorusknotNpos}, \ref{fig:rationalsurgeryontorusknotNposlessthan1}, \ref{fig:QsurgeryontorusknotN_1is0} and \ref{fig:QsurgeryontorusknotN_1isatmostminus2} and the graphs in Figure \ref{fig:liscagraph2234extendedgeneral}, that is between the graphs of surgeries on torus knots and the graphs of Figure \ref{fig:liscagraph2234extendedgeneral}, which we now know to bound rational homology balls.

To turn Figure \ref{fig:liscagraph2234extendedgeneral} into a star-shaped graph, we will need to keep some of the grown complementary legs to length 1. If we let the vertex of weight $-1-a_1$ be trivalent, then $m_1=1$ and thus $(\beta_1,\dots, \beta_{m_2})$ consists only of $2$'s. If $m_2>1$ then $n_2=1$ and $(a_1,\dots,a_{n_1})=(2,\dots, 2)$. In order to have trivalency of the $-(1+a_1)$ vertex, $\alpha_1\geq 3$ is required. It is easy to check that in this case the only legs that can be quasi-complementary are the $(-b_1,-(2+k))$ one and the $(\underbrace{-2,\dots,-2}_{\alpha_1-2})$ one. They can either be negatively quasi-complementary, made complementary by adding $-3$ at the end of the second leg, in which case $\alpha_1=b_1$ and $k=0$ have to hold, or they can be positively quasi-complementary, made complementary by removing $-(2+k)$ from the first one, in which case $\alpha_1-2=b_1-1$. The first case shows that \[ S^3_{\frac{(2b_1^2-2b_1+1)^2}{2b_1^2-b_1+1}}(T(b_1-1,2b_1-1)) \] bounds a rational homology ball for any $b_1\geq 3$. The second case shows that \[ S^3_{\frac{((k+2)b_1^2-1)^2}{(k+2)b_1^2+b_1-1}}(T(b_1,b_1(k+2)-1)) \] bounds a rational homology ball for all integers $b_1 \geq 2$ and $k \geq 0$. Both of these are subfamilies to families 1 and 2 in Theorem \ref{thm:torusknotlist} that we will show can in fact be fully realised.

We get more interesting families when we let $m_2=1$, because then $(\alpha_1,\dots, \alpha_{n_2})$ can be anything as long as it has something but a $2$ somewhere so that $n_1>1$. We will get graphs of the form in Figure \ref{fig:bandbetagone}. To make the top and right legs quasi-complementary is easy: we need to choose whether they are to be positively or negatively quasi-complementary and which leg needs an extra vertex or a vertex removed to be complementary, and then we just need to choose $(a_2,\dots,a_{n_1})$ that make it happen. We use Proposition \ref{prop:aqcleg} for $Q/P=[2+k,2]^-=\frac{2k+3}{2}$. This corresponds to the entire families 3 and 4 as well as subfamilies of families 1 and 2 in Theorem \ref{thm:torusknotlist}. The top and bottom legs cannot be made quasi-complementary.

\begin{figure}
\centering
\tikzfig{bandbetagone}
\caption{Choosing $m_1=m_2=1$ in Figure \ref{fig:liscagraph2234extendedgeneral} gives this star-shaped graph.}
\label{fig:bandbetagone}
\end{figure}

\begin{figure}
\centering
\includegraphics[width=.7\linewidth]{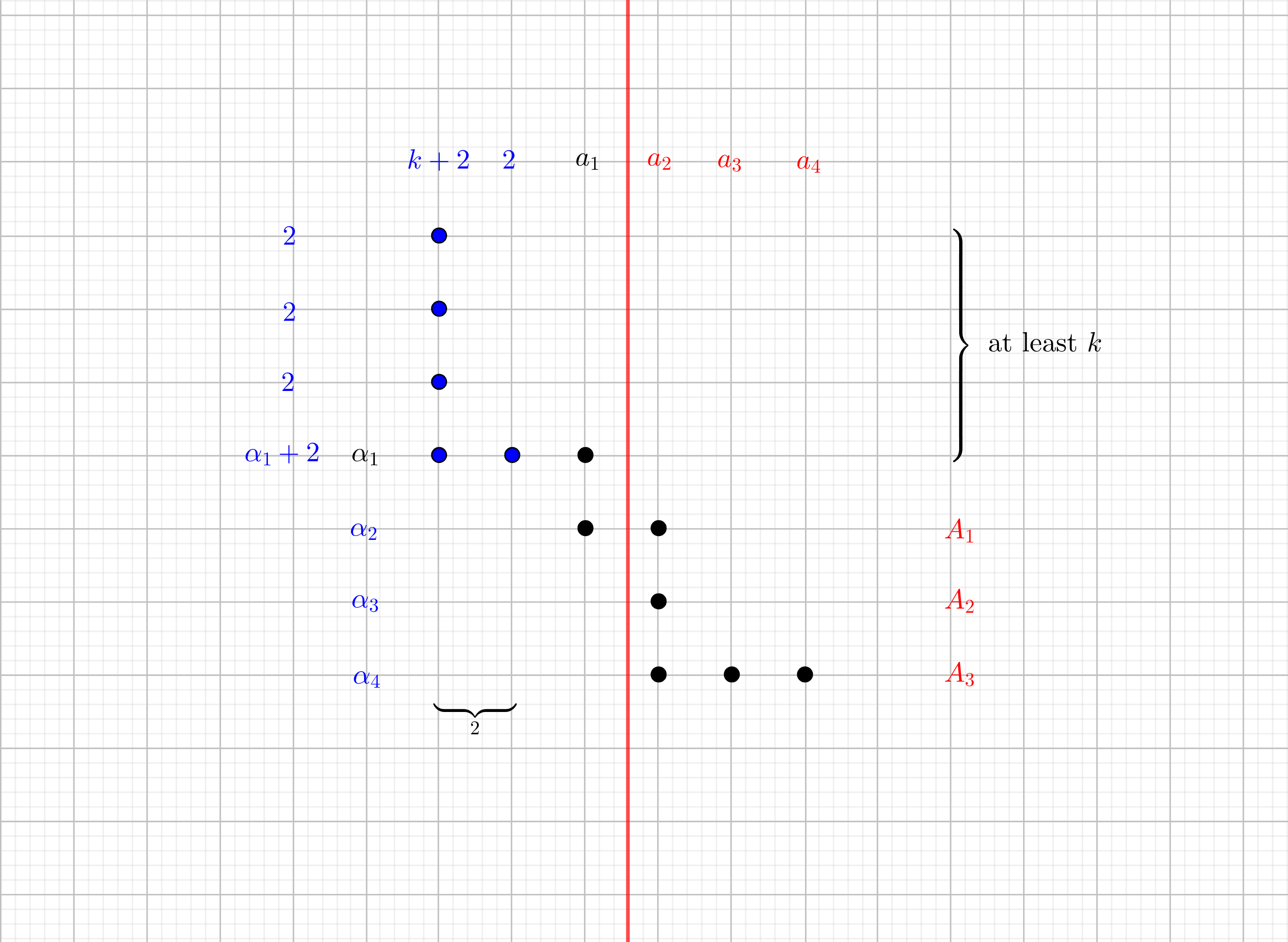}
\caption{}
\label{fig:makegrownlegsqc}
\end{figure}

The most interesting case to consider is whether the right and the bottom legs can be made quasi-complementary. In Figure \ref{fig:makegrownlegsqc}, the black dots show a Riemenschneider diagram of the complementary sequences $(a_1,\dots,a_{n_1})$ and $(\alpha_1,\dots,\alpha_{n_2})$. Adding the blue dots gives us a Riemenschneider diagram for the sequence $(\underbrace{2,\dots,2}_k,\alpha_1+2,\alpha_2, \dots , \alpha_{n_2})$ (with complement $(k+2, 2, a_1, \dots, a_{n_1})$). Considering only the part to the right of the red line gives us a Riemenschneider diagram for $(a_2,\dots, a_{n_1})$ (with a complement $(A_1,\dots,A_{n_3})$). In order for $(\underbrace{2,\dots,2}_k,\alpha_1+2,\alpha_2, \dots , \alpha_{n_2})$ and $(a_2,\dots, a_{n_1})$ to be quasi-complementary, either the picture to the right of the red line and the total picture without the last line, or the total picture and the picture to the right of the the red line with an extra column, must be the same. The sequences $(k+2, 2, a_1, \dots, a_{n_1})$ and $(a_2,\dots, a_{n_1})$ have length difference $2$, removing the second option. The only ways in which $(\underbrace{2,\dots,2}_k,\alpha_1+2,\alpha_2, \dots , \alpha_{n_2})$ and $(A_1,\dots,A_{n_3})$ can have length difference $1$ is if any of the following hold: \begin{enumerate}
\item $k=0$ and $a_1=3$, or
\item $k=1$ and $a_1=2$.
\end{enumerate}

If $k=0$ and $a_1=3$, then the first row of the total picture has length $3$. Thus, in the second total row, to the right of the red line, we need three dots, making a total of $4$ dots. This is a valid solution, namely $(\alpha_1,\dots,\alpha_{n_2})=(2,5)$, $(\alpha_1+2, \alpha_2,\dots,\alpha_{n_2})=(4,5)$, $(A_1,\dots,A_{n_3})=(4)$ and $(a_1,\dots,a_{n_1})=(3,2,2,2)$. If we choose to continue and add $\alpha_3$, that means adding a new row completely to the right of the red line, which must be as long as the second total row, namely 4 dots. That again gives a valid solution $(\alpha_1,\dots,\alpha_{n_2})=(2,5,5)$, $(\alpha_1+2, \alpha_2,\dots,\alpha_{n_2})=(4,5,5)$, $(A_1,\dots,A_{n_3})=(4,5)$ and $(a_1,\dots,a_{n_1})=(3,2,2,3,2,2,2)$. We can continue this process and obtain the solution $(\alpha_1,\dots,\alpha_{n_2})=(2,(5)^{[l]})$ and $(a_1,\dots,a_{n_1})=((3,2,2)^{[l]},2,2)$ for all $l\geq 1$. Our legs are positively quasi-complementary, so $\alpha/p=[d_1,\dots,d_t]^-=[(5)^{[l]},4]^-$. Since $5-\frac{b}{a}=\frac{5a-b}{a}$, we have that \[
\begin{pmatrix}
\alpha\\
p
\end{pmatrix} = \begin{pmatrix}
5 & -1\\
1 & 0
\end{pmatrix}^{l} \begin{pmatrix}
4\\
1
\end{pmatrix}
\] for $l\geq 1$. This corresponds to family 6 in Theorem \ref{thm:torusknotlist}. We can compute $N=[0,3,2,2]^-=-\frac{3}{7}$. In other words, if $p_1=1$, $p_2=4$ and $p_{j+2}=5p_{j+1}-p_j$ for all $j\geq 0$ \cite[A004253]{OEIS}, we can say that \[ S^3_{p_jp_{j+1}-\frac{3}{7}}(T(p_j,p_{j+1})) \] bounds a rational homology ball for all $j\geq 1$. In this form it may not be obvious that the numerator of the surgery coefficient is a square, but in fact, $p_jp_{j+1}-\frac{3}{7}=\frac{V^2_{j+1}}{7}$ for $V_j$ being a sequence defined by $V_1=2$, $V_2=5$ and $V_{j+2}=5V_{j+1}-V_j$ for all $j\geq 0$ \cite[A003501]{OEIS}. It is a shifted so called Lucas sequence. The equality can be proven by first proving by induction that $p_{j+2}p_j-p_{j+1}^2=3$ for all $j\geq 0$, then noting that $V_{j+1}=p_{j+1}+p_j$ for all $j\geq 0$, and finally combining these equalities.

If $k=1$ and $a_1=2$ the argument goes the same way. The only way for the right and bottom legs to be quasi-complementary is if the Riemenschneider diagram to the right of the red line and the total diagram missing the bottom line coincide. By the same argument as above, it happens if and only if $(\alpha_1,\dots,\alpha_{n_2})=(3,(5)^{[l]})$ and $(a_1,\dots,a_{n_1})=(2,(3,2,2)^{[l]},2)$ for all $l\geq 0$. In this case $\alpha/p=[(5)^{[l]},5,2]^-$ and $N=[0,2,2,3]^-=-\frac{5}{7}$. This shows that if $Q_1=2$, $Q_2=9$ and $Q_{j+2}=5Q_{j+1}-Q_j$ for all $j \geq 1$, then \[ S^3_{Q_jQ_{j+1}-\frac{5}{7}}(T(Q_j,Q_{j+1})) \] bounds a rational homology ball for all $j\geq 1$. This corresponds to family 7 in Theorem \ref{thm:torusknotlist}. Just as before, we can show that \[ Q_jQ_{j+1}-\frac{5}{7} = \frac{(Q_j+Q_{j+1})^2}{7}. \]

\begin{figure}
\centering
\tikzfig{aandalphagone}
\caption{Choosing $n_1=n_2=1$ in Figure \ref{fig:liscagraph2234extendedgeneral} gives this star-shaped graph.}
\label{fig:aandalphagone}
\end{figure}

\begin{figure}
\centering
\includegraphics[width=.7\linewidth]{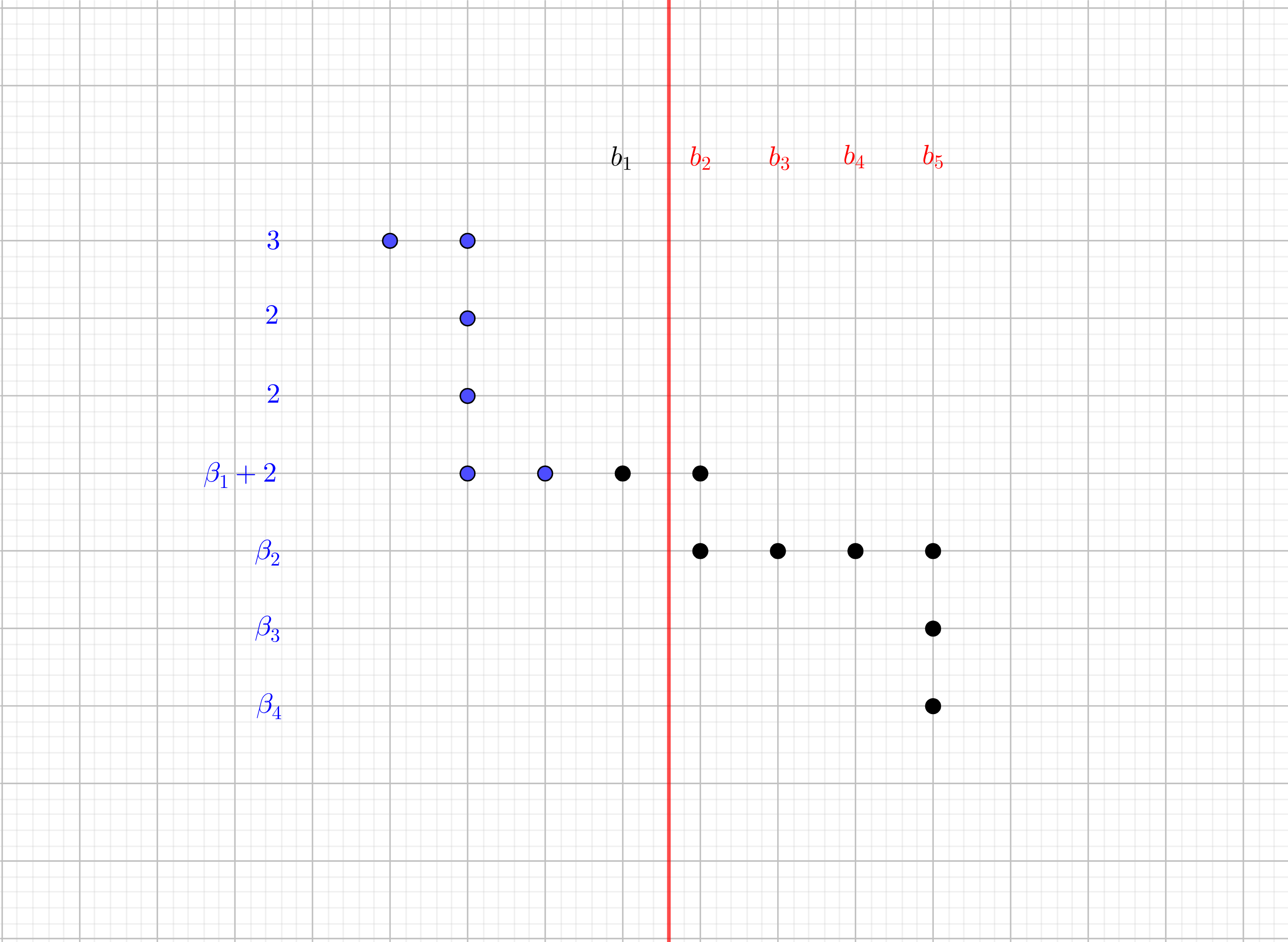}
\caption{}
\label{fig:makegrownlegsqc2}
\end{figure}

Returning to Figure \ref{fig:liscagraph2234extendedgeneral}, we can let the vertex of weight $-b_1$ be the only node. That forces $n_1=1$, so $(\alpha_1,\dots, \alpha_{n_2})=(2,\dots,2)$. Putting $a_1=2$ would give us complete freedom in choosing $(b_2,\dots, b_{m_1})$, so Proposition \ref{prop:aqcleg} applied to $Q/P=2+k$ gives that there are surgery coefficients $n$ such that $S^3_n(T(k+1,k+2))$, $S^3_n(T(k+2,(l+2)(k+2)-1))$ and $S^3_n(T(k+2,(l+1)(k+2)+1))$ bound rational homology $4$-balls. These families correspond to the entire families 1 and 2 in Theorem \ref{thm:torusknotlist}. (Note however, that a couple of subfamilies of these will also be realised if we choose $a_1>2$ because $(b_2,\dots,b_{m_1})=(2,\dots,2)$. These subfamilies have an especially ample supply of choices of surgery coefficents.) If $n_2>1$, then $m_2=1$ and $(b_1,\dots,b_{m_1})=(2,\dots,2)$. We will have three legs, namely $(-(2+k))$, $((-2)^{[\beta_1-2]})$ and $(-(1+a_1),(-2)^{[k]}, -(2+\beta_1), (-2)^{[a_1-2]})$. The first two can be quasi-complementary in two ways, but the generated pairs $(p,\alpha)$ are already known. The first and the third cannot be quasi-complimentary. The last two can also not be quasi-complementary if $n_2>1$. It is once again more interesting if $n_2=1$ and $m_2>1$ is allowed. We get the graph in Figure \ref{fig:aandalphagone}. The top and the bottom legs cannot be made quasi-complementary. The left and the bottom legs are the interesting case. Analogously to how we used Figure \ref{fig:makegrownlegsqc}, we can use Figure \ref{fig:makegrownlegsqc2} to show that $k=0$ and $(\beta_1,\dots, \beta_{m_2})=(4,(6)^{[l]})$ and $(b_1,\dots,b_{m_1})=(2,2,(3,2,2,2)^{[l]},2)$. This is in fact family (4) in \cite[Theorem 1.1]{GALL} and family 8 in Theorem \ref{thm:torusknotlist}.

Going back to Figure \ref{fig:liscagraph2234extendedgeneral}, we could also make the vertex of weight $-\alpha_1-\beta_1$ the only trivalent vertex, but that would require $m_1=n_1=1$ and thus all $\alpha_i$ and all $a_j$ are $2$'s. The top leg would not be able to be quasi-complementary to a sequence of $2$s, and the only way for the left and right legs to be quasi-complementary is if they are the legs $(-2)$ and $(-2,-2)$ in either order. This just gives us two new families of possible surgery coefficients on $T(2,3)$.

\printbibliography

\end{document}